\newcommand{\myitem}[1]{%
\item[#1]\protected@edef\@currentlabel{#1}%
}
\providecommand{\customgenericname}{}
\newcommand{\newcustomtheorem}[2]{%
  \newenvironment{#1}[1]
  {%
   \renewcommand\customgenericname{#2}%
   \renewcommand\theinnercustomgeneric{##1}%
   \innercustomgeneric
  }
  {\endinnercustomgeneric}
}
\newtheorem{thm}{Theorem}[section]
\newtheorem{lemma}[thm]{Lemma}
\newtheorem{proposition}[thm]{Proposition}
\newtheorem*{thm*}{Theorem}
\newtheorem*{corollary*}{Corollary}
\newtheorem*{lemma*}{Lemma}
\newtheorem*{proposition*}{Proposition}
\theoremstyle{definition}
\newtheorem{definition}[thm]{Definition}
\newtheorem*{definition*}{Definition}
\newtheorem{remark}[thm]{Remark}
\newtheorem*{remark*}{Remark}
\newcommand{\bb}[1]{\mathbb{#1}}
\newcommand{\ssf}[1]{\mathsf{#1}}
\newcommand{\supp}{\mathsf{supp}}
\newcommand{\lip}{\mathsf{lip}}
\newcommand{\Sec}{\mathsf{Sec}}
\newcommand{\Lip}{\mathsf{Lip}}
\newcommand{\Ric}{\mathsf{Ric}}
\newcommand{\RCD}{\mathsf{RCD}}
\newcommand{\CD}{\mathsf{CD}}
\newcommand{\aH}{\mathsf{H}}
\newcommand{\BV}{\mathsf{BV}}
\newcommand{\W}{\mathsf{W}}
\newcommand{\X}{X}
\newcommand{\sd}{\mathsf{d}}
\newcommand{\sL}{\mathsf{L}}
\newcommand{\m}{\mathfrak{m}}
\newcommand{\ssubset}{\subset\subset}
\newcommand{\mres}{\mathbin{\vrule height 1.6ex depth 0pt width
0.13ex\vrule height 0.13ex depth 0pt width 1.3ex}}
\newcommand{\customlabel}[2]{%
   \protected@write \@auxout {}{\string \newlabel {#1}{{#2}{\thepage}{#2}{#1}{}} }%
   \hypertarget{#1}{#2}
}
\title{On perimeter minimizing sets in manifolds with quadratic volume growth}
\author{Alessandro Cucinotta\footnote{Mathematical Institute, University of Oxford \textit{E-mail}:  \href{mailto:alessandro.cucinotta@maths.ox.ac.uk}{alessandro.cucinotta@maths.ox.ac.uk}} \,and Mattia Magnabosco\footnote{Mathematical Institute, University of Oxford \textit{E-mail}:  \href{mailto:mattia.magnabosco@maths.ox.ac.uk}{mattia.magnabosco@maths.ox.ac.uk}}}
\date{}
\begin{document}

\maketitle

\begin{abstract}
    This paper studies whether the presence of a perimeter minimizing set in a Riemannian manifold $(M,g)$ forces an isometric splitting. We show that this is the case when $M$ has non-negative sectional curvature and quadratic volume growth at infinity. Moreover, we obtain that the boundary of the perimeter minimizing set is identified with a slice in the product structure of $M$.

    
\end{abstract}

\section{Introduction}

A classical problem in calculus of variations is to determine the geometry of sets minimizing the perimeter in Euclidean space. A central result is that the only perimeter minimizing sets in $\bb{R}^n$ are Euclidean half spaces if and only if $n \leq 7$. Similarly, there exists non-affine solutions of the minimal surface equation on $\bb{R}^n$ if and only if $n \geq 8$.
A natural question is whether these results hold, in a generalized sense, in the setting of Riemannian manifolds. 

The rigidity properties of minimal graphs on Riemannian manifolds have been a recent topic of investigation. For example, assuming non-negative Ricci curvature, the only \emph{positive} solutions of the minimal surface equation are the constant functions by \cite{CMMR,Ding} (see also \cite{cmmr23}).
Without the positivity condition, solutions of the minimal surface equation on parabolic manifolds with non-negative Ricci curvature have vanishing Hessian by \cite{Colombo_2024}.

On the other hand, when considering general perimeter minimizing sets in Riemannian manifolds, a lot of properties can be obtained as a byproduct of the several known results on \emph{stable} minimal hypersurfaces. For instance, stable two-sided minimal hypersurfaces in Riemannian $3$-manifolds with non-negative Ricci curvature are totally geodesic by \cite{SchoenYau}, while other celebrated results along these lines were proved in \cite{FischerSchoen} and \cite{CurvYau}. More recently, it was shown in
\cite{chodosh2024completestableminimalhypersurfaces} that, in a $4$-manifold with non-negative sectional curvature, scalar curvature $\geq 1$, and weakly bounded geometry, every two-sided stable minimal hypersurface is totally geodesic (see also \cite{espinar2024frankelpropertymaximumprinciple} for a related result).

Nevertheless, when working with perimeter minimizing sets rather than stable minimal hypersurfaces, stronger rigidity results are to be expected.
This leads to the following question.

\medskip
\noindent\textbf{Question 1.} \label{Question1} Let $(M,g)$ be a Riemannian manifold and let $E \subset M$ be perimeter minimizing. Under which conditions on $M$, can we infer that $M \cong N \times \bb{R}$ and $E \cong N \times \bb{R}_+$, for some manifold $(N,g')$?

\medskip
In \cite{Anderson}, it is shown that if $M$ has at most cubic volume growth, non-negative Ricci curvature and sectional curvature bounded from above (so that one also has a uniform lower sectional curvature bound), then the presence of a perimeter minimizer forces the universal cover of the manifold to split-off a real line (see also \cite{AndersonRodriguez} and \cite{AndersonAreaMin} for related results).

In \cite{EffectiveCarlotto}, it is shown that the only asymptotically flat $3$-manifold with non-negative scalar curvature which contains a perimeter minimizer is $\bb{R}^3$ (see also \cite{ChodoshScalar} for a related result). 
From \cite[Theorem 2]{CucinottaFiorani}, combining with the results from \cite{CheegerNaberCodim4}, it follows that the only Ricci-flat $4$-manifold with maximal volume growth containing a perimeter minimizing set is $\bb{R}^4$.

In view of the many rigidity results for manifolds with non-negative sectional or Ricci curvature, one expects to answer Question \ref{Question1} requiring only a lower curvature bound. In this paper, we present a result in this direction.

\begin{thm} \label{TMain}
    Let $(M^n,g,p)$ be a pointed Riemannian manifold with $\Sec_M \geq 0$ and such that
    \begin{equation} \label{Evol}
    \liminf_{r \to + \infty} \frac{\ssf{Vol}(B_r(p))}{r^2} < + \infty.
    \end{equation}
    If $E \subset M$ is perimeter minimizing, then $M \cong N \times \bb{R}$ and $E \cong N \times [0,+\infty)$.
\end{thm}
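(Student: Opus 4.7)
The plan is to show that $\partial E$ embeds in $M$ as an isometric product $\partial E\times\mathbb R$, proceeding in three steps. The first step is a linear area bound for $\partial E$. Since $E$ is perimeter minimizing, comparing $E$ with the competitor $E\setminus B_r(p)$ gives, for a.e.\ $r>0$, $\Per(E,B_r(p))\le \mathcal{H}^{n-1}(\partial B_r(p)\cap E)$. Combining this with the coarea identity $\ssf{Vol}(B_R(p))=\int_0^R \mathcal{H}^{n-1}(\partial B_s(p))\,ds$ and the hypothesis \eqref{Evol}, an averaging argument produces a divergent sequence $r_i\to\infty$ with $\Per(E,B_{r_i}(p))\le Cr_i$.

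The second step promotes $\partial E$ to a totally geodesic hypersurface. The regular part of $\partial E$ is a two-sided stable minimal hypersurface, and its singular set has codimension at least $7$, absorbed by a logarithmic capacity cutoff. Testing the stability inequality
\[
\int_{\partial E}\bigl(|A|^2+\Ric_M(\nu,\nu)\bigr)\phi^2\,d\mathcal{H}^{n-1}\le \int_{\partial E}|\nabla \phi|^2\,d\mathcal{H}^{n-1}
\]
with the Lipschitz cutoff $\phi_i$ equal to $1$ on $B_{r_i/2}(p)$ and vanishing on $M\setminus B_{r_i}(p)$ yields $\int_{\partial E}|\nabla \phi_i|^2\le (4/r_i^2)\Per(E,B_{r_i}(p))\le C/r_i\to 0$. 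Since $\Sec_M\ge 0$ forces $\Ric_M(\nu,\nu)\ge 0$, the limit yields $|A|\equiv 0$ and $\Ric_M(\nu,\nu)\equiv 0$ on $\partial E$. Writing $\Ric_M(\nu,\nu)=\sum_j \Sec_M(\nu,e_j)$ in an orthonormal frame of $T\partial E$ and using $\Sec_M\ge 0$ termwise, one obtains $\Sec_M(\nu,v)\equiv 0$ for every $v\in T\partial E$.

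The third step propagates the rigidity along the normal direction. Along a normal geodesic $\gamma(t)=\exp_{y_0}(t\nu)$, the shape operator $A(t)$ of the equidistant hypersurface $\Sigma_t$ obeys the Riccati equation $A'=-A^2-\mathcal R_{\gamma'(t)}$ with $\mathcal R_{\gamma'(t)}\ge 0$. The initial conditions from the second step give $A(t)\le 0$ for $t>0$, hence $|J(t)|\le|J(0)|$ for Jacobi fields $J$ with $J'(0)=0$, and therefore $\mathcal{H}^{n-1}(\Sigma_t\cap\Omega)\le \mathcal{H}^{n-1}(\partial E\cap\Omega)$ on every compact $\Omega$. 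A second invocation of perimeter minimality -- comparing $E$ with the competitor obtained by expanding $E$ outward by $t$ on a compact piece $K\subset\partial E$, and using the first step to make the side-wall contribution negligible as $K\nearrow\partial E$ -- reverses this inequality. Equality then forces $|J(t)|$ constant, so $A(t)\equiv 0$, and the Riccati equation gives $\mathcal R_{\gamma'(t)}\equiv 0$; continuation extends these to all $t\in\mathbb{R}$. Therefore $\Phi:\partial E\times\mathbb{R}\to M$, $\Phi(y,t)=\exp_y(t\nu)$, is a local Riemannian isometry without focal points, hence a Riemannian covering by completeness of $M$; since $\Phi$ restricts to the inclusion on $\{t=0\}$, the covering has a single sheet and $\Phi$ is a global isometry, giving $M\cong \partial E\times\mathbb{R}$ with $E=\partial E\times(-\infty,0]$.

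The main obstacle is the third step: the second step provides rigidity only on $\partial E$, and promoting $A\equiv 0$ along normal geodesics requires a global use of perimeter minimality beyond the local stability inequality. Controlling the side-wall contribution as one exhausts $\partial E$ with compacts, and uniformly handling the singular set of $\partial E$ during this propagation, are the principal technical hurdles.
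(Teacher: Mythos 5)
Your Steps 1 and 2 are sound and essentially reproduce the first half of the paper's argument (Theorem \ref{T1}): a linear perimeter bound from the volume hypothesis, the stability inequality with a cutoff adapted to that bound, a capacity argument for the singular set, and the termwise vanishing of $\Sec(\nu,\cdot)$ from $\Ric(\nu,\nu)=0$. Two caveats there: your cutoff argument only yields rigidity on the \emph{regular} part of $\partial E$, and for Step 3 you need $\partial E$ genuinely smooth — the paper removes the singular set by Federer dimension reduction, using that blow-ups of a totally geodesic boundary are totally geodesic cones, hence hyperplanes; this is not optional. Also, before any equidistant competitor makes sense you must deal with focal points and the cut locus of $\partial E$: with only $\Sec\ge 0$ and $A(0)=0$, the Riccati inequality $A'=-A^2-\mathcal R\le 0$ does not prevent $A$ from blowing up to $-\infty$ in finite time.

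The genuine gap is the "second invocation of minimality" in Step 3. Write $J(y,t)\le 1$ for the Jacobian of the normal flow and $K=B^{\partial E}_R(q)$. Comparing $E$ with $E$ enlarged by the normal collar over $K$ gives
\begin{equation}
\int_K\bigl(1-J(y,t)\bigr)\,d\mathcal H^{n-1}(y)\;=\;\mathcal H^{n-1}(K)-\mathcal H^{n-1}(\Sigma_t^K)\;\le\;\mathcal H^{n-1}(W_{R,t})\;\le\;t\,\mathcal H^{n-2}(\partial K),
\end{equation}
where $W_{R,t}$ is the side wall. Your Step 1 bound $\mathcal H^{n-1}(B_R^{\partial E})\le CR$ together with the coarea formula only yields a sequence $R_j\to\infty$ with $\mathcal H^{n-2}(\partial B_{R_j}^{\partial E})\le C$; since $\partial E$ is non-compact this constant is bounded \emph{below} away from zero as well. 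Passing to the limit therefore gives $\int_{\partial E}(1-J(\cdot,t))\,d\mathcal H^{n-1}\le Ct$, i.e. the wall term is of exactly the same order as the deficit you are trying to kill. This shows the total area deficit is finite, not that it vanishes, so you cannot conclude $J\equiv 1$, $A\equiv 0$ off $\partial E$, or $\mathcal R_{\gamma'}\equiv 0$, and the splitting does not follow. This failure is not an artifact: the paper points out that there are $\RCD(0,4)$ cones $C([0,l])$ with $l<\pi$ carrying a perimeter minimizer, so no soft comparison of this type can force the splitting. The paper instead glues $E$ to the cylinder $\partial E\times\bb R_+$, passes to the blow-down (a two-dimensional metric cone by the quadratic growth), and rules out the cone angle $l<\pi$ by a measure-rigidity argument (Lemma \ref{L6}, Theorem \ref{P1}, Proposition \ref{prop:main}) before pulling a line back to the glued space and applying the Splitting Theorem. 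You would need a substitute for that entire mechanism to close Step 3. (Your final covering-space assertion that injectivity of $\Phi$ on the slice $\{t=0\}$ forces a single sheet also needs justification, but it is minor compared with the above.)
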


We remark that, a posteriori, the manifold $M$ from Theorem \ref{TMain} satisfies
\[
\limsup_{r \to + \infty} \frac{\ssf{Vol}(B_r(p))}{r^2} < + \infty.
\]
By \cite[Remark 3.11]{JostExistence} (see also \cite{ConiMorgan}), there exists a $4$-manifold with strictly positive sectional curvature which contains a perimeter minimizing set, so that Theorem \ref{TMain} fails if one asks for the volume growth at infinity to be at most quartic, instead of quadratic as in \eqref{Evol}. On the other hand, Theorem \ref{T1} below suggests that Theorem \ref{TMain} could hold even if the volume growth at infinity is at most cubic.

\medskip
We remark that, due to \cite{PetruninScalar}, $4$-manifolds with non-negative sectional curvature and scalar curvature $\geq 1$, have at most quadratic volume growth at infinity. 
Under these curvature assumptions (and also assuming weakly bounded geometry), stable two-sided minimal hypersurfaces are totally geodesic due to \cite{chodosh2024completestableminimalhypersurfaces}. Nevertheless, assuming only the stability of the hypersurface, no isometric splitting of the ambient space can be expected.
A consequence of Theorem \ref{TMain} is that, in $4$-manifolds with non-negative sectional curvature and scalar curvature $\geq 1$, replacing the local condition of stability with the global condition of being an area minimizing boundary,
one also obtains the global isometric splitting of the ambient space.
\\
\\
We now explain the main ideas behind Theorem \ref{TMain}. To this aim, we first briefly recall the proof that if $E \subset \bb{R}^n$ is perimeter minimizing and $n \leq 7$, then $E$ is a half space. By the monotonicity formula for minimal sets, the tangent cone at infinity $E_\infty \subset \bb{R}^n$ of $E$ is a perimeter minimizing cone. Since $n \leq 7$, the second variation formula for minimal cones forces $E_\infty$ to be a half-space. The rigidity case of the monotonicity formula then implies that the initial $E \subset \bb{R}^n$ is a half-space as well.

To prove Theorem \ref{TMain}, we repeat a similar argument in the setting of Riemannian manifolds. Unlike Euclidean spaces, Riemannian manifolds are not invariant under rescalings of the Riemannian metric. Hence, to repeat the aforementioned strategy, it becomes necessary to work in a larger class of spaces.
The right setting turns out to be the one of metric measure spaces with non-negative Ricci curvature and finite dimension in synthetic sense, i.e.\ $\RCD(0,N)$ spaces (see Section \ref{S1}).
We stress that, even though the statement of Theorem \ref{TMain} only deals with sectional curvature lower bounds, the setting of Alexandrov spaces with non-negative sectional curvature would not be general enough to implement the aforementioned strategy.

We take an appropriate sequence of scales $r_i \uparrow + \infty$, and we consider a pointed measured Gromov-Hausdorff limit $(X,\sd,\m,p)$ of the spaces $(M,g/r_i^2)$ equipped with their renormalized volume measures. The metric space $(X,\sd)$ is a metric cone with non-negative sectional curvature, while $(X,\sd,\m)$ is an $\RCD(0,N)$ space. Moreover, there exists a set $E_\infty \subset X$ minimizing the perimeter (w.r.t. the metric measure structure on $X$). As in the Euclidean case, to conclude, it is sufficient to show that one has the isometric splitting $X \cong Y \times \bb{R}$ for some metric measure space $(Y,\sd_y,\m_y)$, and that, with this identification, it holds $E_\infty \cong Y \times \bb{R}_+$.

By condition \eqref{Evol} and the fact that $M$ contains a perimeter minimizer, $(X,\sd)$ has Hausdorff dimension at most $2$.  
If the Hausdorff dimension of $X$ is equal to $1$, the desired isometric splitting follows by standard arguments. Hence, we only study the case when $(X,\sd)$ has Hausdorff dimension exactly $2$. 
\par 
If $X$ is a cone over $S^1_R$ for some $R\in (0,1]$, i.e.\ $X=C(S^1_R)$, relying on the Splitting Theorem for $\RCD(0,N)$ spaces, we show that the only measure $\m$ so that $(C(S^1_R),\sd,\m)$ is $\RCD(0,N)$ is (a rescaling of) the two-dimensional Hausdorff measure. It then follows that $R=1$ and that $E_\infty \subset C(S^1_R) \cong \bb{R}^2$ is a half space, as claimed.

The non-trivial case is when $X$ is a cone over an interval, i.e.\ $X=C([0,l])$ for $l\in(0,\pi]$. We remark that there exists a measure $\m'$ on $C([0,l])$ such that $(C([0,l]),\sd,\m')$ is $\RCD(0,4)$, it contains a perimeter minimizer, and $l<\pi$. In particular, to treat this case, one cannot just rely on the fact that $(C([0,l]),\sd,\m)$ is an $\RCD(0,N)$ space containing a perimeter minimizer.
The key observation is that condition \eqref{Evol}, paired with the fact that $M$ contains a perimeter minimizer, implies that the volume of balls in $M$ grows at a uniform rate at infinity. 
This, combined with the concavity properties of $\RCD(0,N)$ densities on half lines, allows to deduce additional regularity for the limiting measure $\m$ on $X$. By a comparison argument (which relies on the recent results from \cite{Weak,lapb}), we then construct another measure $\tilde{\m}$ on $C([0,l])$ so that $(C([0,l],\sd,\tilde{\m})$ is $\RCD(0,N+1)$, the set $E_\infty \subset X$ is perimeter minimizing with respect to $\tilde{\m}$, and $\tilde{\m}$ converges to the $2$-dimensional Hausdorff measure at infinity. By taking another blow-down, we then deduce that $E_\infty$ minimizes the perimeter in $C([0,l])$ w.r.t. the $2$-dimensional Hausdorff measure, so that $l=\pi$, as claimed.

\medskip
We conclude by remarking that Theorem \ref{T1} below suggests that the optimal way to answer Question \ref{Question1} would be to require $\Ric_M \geq 0$ and 
\[
\int_1^{+\infty} \frac{t^2}{\ssf{Vol}(B_t(p))} \, dt=+\infty.
\]
However, assuming only non-negative Ricci curvature, very little is known on the structure of tangent cones at infinity (see the counterexamples in \cite{Colding1}). In particular, such tangent cones might not be unique and they might not be metric cones. Moreover, even if a tangent cone at infinity splits a line, the initial ambient space might fail to do so. Therefore, the blow-down procedure at the core of our argument does not easily adapt to manifolds with non-negative Ricci curvature. Finally, our strategy also crucially relies on the volume growth assumption \eqref{Evol}. Indeed, we apply the strong available results on manifolds with linear volume growth to the area minimizing boundary $\partial E$.

\subsection*{Acknowledgments}  
The authors wish to thank Daniele Semola and Andrea Mondino for inspiring discussions and suggestions.

 M.\,M.\;acknowledges support from the Royal Society through the Newton International Fellowship (award number: NIF$\backslash$R1$\backslash$231659).\ Part of this research was carried out by the authors at the Hausdorff Institute of Mathematics in Bonn, during the trimester program  ``Metric Analysis''. The authors wish to express their appreciation to the institution for the stimulating atmosphere and the excellent working conditions. For the purpose of Open Access, the authors have applied a CC BY public copyright licence to any Author Accepted Manuscript (AAM) version arising from this submission.

\section{Preliminaries} \label{S1}

A metric measure space is a triple $(X,\sd,\m)$, where $(X,\ssf{d})$ is a separable complete metric space and $\m$ is a locally finite Borel measure on $X$. Given a measurable set $A \subset X$, we denote by $\sL^1(A,\m)$ and $\sL^1_{loc}(A,\m)$ respectively integrable functions and locally integrable functions on $A$. Given an open set $\Omega \subset X$, we denote by $\Lip(\Omega)$, $\Lip_{loc}(\Omega)$, and $\Lip_c(\Omega)$ respectively Lipschitz functions, locally Lipschitz functions, and compactly supported Lipschitz functions on $\Omega$. If $f \in \Lip_{loc}(\Omega)$ and $x \in \Omega$ we set
\begin{equation} 
\lip(f)(x):= \limsup_{y \to x} \frac{|f(x)-f(y)|}{\ssf{d}(x,y)}.
\end{equation}

We briefly recall some facts on Ricci limit spaces and $\RCD(K,N)$ spaces.
In the foundational papers \cite{ChCo1, Colding1,ChcoStructure2,ChCo3}, Cheeger and Colding studied the structure of Ricci limit spaces, i.e. metric measure spaces arising as limits of manifolds of fixed dimension with a uniform lower bound on the Ricci curvature. We refer to the book \cite{CheegerLectures} and the references therein for an introduction to the topic.

$\RCD(K,N)$ spaces are metric measure spaces where $K$ plays the role of a lower bound on the Ricci curvature and $N$ plays the role of an upper bound on the dimension. They were introduced in \cite{AGS2} (in the case when $N=+\infty$) and \cite{Giglimem} (in the case when $N<+\infty$) following the seminal papers \cite{S1,S2,V}.
The class of $\RCD(K,N)$ spaces contains Ricci limit spaces and finite dimensional Alexandrov spaces with curvature bounded from below.
For a complete introduction to the topic, we refer to the survey \cite{Asurv}. 
From now on, when considering $\RCD(K,N)$ spaces, we always assume $N < +\infty$. The following key result follows from \cite{S2}.

\begin{thm}
    $\RCD(K,N)$ spaces are uniformly locally doubling.
\end{thm}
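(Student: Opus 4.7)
The plan is to derive uniform local doubling from the Bishop--Gromov volume comparison for $\CD(K,N)$ spaces established in \cite{S2}, using the fact that any $\RCD(K,N)$ space is in particular a $\CD(K,N)$ space (this is by definition of the Riemannian refinement).

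More precisely, I would first recall the Bishop--Gromov inequality: on any $\CD(K,N)$ space $(X,\sd,\m)$ and for every $x\in X$, the map
\[
r\longmapsto \frac{\m(B_r(x))}{v_{K,N}(r)}
\]
is non-increasing on $(0,+\infty)$, where $v_{K,N}(r)$ denotes the volume of a ball of radius $r$ in the $N$-dimensional model space of constant sectional curvature $K/(N-1)$ (with the usual convention $\sinh$ versus $\sin$ according to the sign of $K$, truncating the domain when $K>0$). This is exactly the content of the results of Sturm cited as \cite{S2}, applied to any $\RCD(K,N)$ space.

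From the monotonicity, for any $x\in X$ and any $0<r\leq R_0$ one obtains
\[
\m(B_{2r}(x))\;\leq\; \frac{v_{K,N}(2r)}{v_{K,N}(r)}\,\m(B_r(x)).
\]
The second key step is to observe that the function $r\mapsto v_{K,N}(2r)/v_{K,N}(r)$ is continuous on $(0,R_0]$ (for any $R_0$ small enough that $2R_0$ stays in the admissible range, i.e.\ $R_0<\pi/(2\sqrt{K/(N-1)})$ when $K>0$, and any $R_0$ otherwise) and tends to $2^N$ as $r\to 0^+$. Hence it is uniformly bounded on $(0,R_0]$ by a constant $C=C(K,N,R_0)$, which gives
\[
\m(B_{2r}(x))\;\leq\; C\,\m(B_r(x))\qquad\text{for all }x\in X,\ r\in(0,R_0].
\]
This is exactly the uniform local doubling property, with constant depending only on $K$, $N$, and the chosen scale $R_0$.

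There is no real obstacle here: once Bishop--Gromov on $\CD(K,N)$ spaces is available as a black box, the argument is just the observation that the model volume ratio $v_{K,N}(2r)/v_{K,N}(r)$ is bounded on bounded intervals of radii. The only small care needed is the case $K>0$, where one must restrict to $R_0$ below the diameter bound of the model space so that $v_{K,N}(2r)$ is well defined and strictly positive; outside this regime, the statement is formulated locally precisely to avoid the issue.
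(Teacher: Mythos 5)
Your argument is correct and is exactly what the paper intends by its citation of Sturm's work: the uniform local doubling property is the standard consequence of the Bishop--Gromov monotonicity for $\CD(K,N)$ spaces, combined with the boundedness of the model volume ratio $v_{K,N}(2r)/v_{K,N}(r)$ on bounded ranges of radii (with the usual restriction below the Bonnet--Myers diameter bound when $K>0$). The paper gives no independent proof, so there is nothing further to compare.
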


The previous result and Gromov's precompactness Theorem imply that the class of $\RCD(K,N)$ spaces is precompact w.r.t. the pointed measured Gromov-Hausdorff convergence (abbreviated pmGH). For the relevant background on this notion of convergence, we refer to \cite{GMS13}.
We only recall that in the case of a sequence of uniformly locally doubling metric measure spaces $(X_i,\sd_i, \m_i,x_i)$, pmGH convergence to $(X,\sd,\m,p)$ can be equivalently characterized by asking for the existence of a proper
metric space $(Z, \ssf{d}_Z )$ such that all the metric spaces $(X_i
, \sd_i)$ are isometrically embedded
into $(Z, \sd_Z )$, $x_i \to x$ and $\m_i \to \m$ weakly in $Z$. In this case we say that the convergence is realized in the space $Z$.

Theorem \ref{stability} below follows combining Gromov's precompactness Theorem with the stability of the $\RCD$ condition under pmGH convergence \cite{GMS13} (after \cite{S1,S2, V, AGS2}). 

\begin{thm} \label{stability}
    The class of pointed $\RCD(K,N)$ spaces with normalized measures is sequentially compact with respect to pointed measured Gromov-Hausdorff convergence.
\end{thm}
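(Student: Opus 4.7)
The plan is to combine the uniform local doubling property (the previous theorem) with Gromov's precompactness theorem and the known stability of the $\RCD(K,N)$ condition under pmGH convergence.

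First I would take an arbitrary sequence $(X_i,\sd_i,\m_i,x_i)$ of pointed $\RCD(K,N)$ spaces with $\m_i(B_1(x_i))=1$. By the previous theorem, there is a function $C(K,N,R)$, independent of $i$, controlling the doubling constant of $\m_i$ on balls of radius $\le R$ centered near $x_i$. In particular $\m_i(B_R(x_i))\le C(K,N,R)$ for every $R>0$, so the measures are uniformly locally bounded, and each metric ball $B_R(x_i)\subset X_i$ is uniformly totally bounded (the number of $\varepsilon$-balls needed to cover it is bounded by a quantity depending only on $K,N,R,\varepsilon$).

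Next, Gromov's precompactness theorem applied to this uniform total boundedness produces a subsequence, a pointed proper metric space $(X,\sd,p)$, and a proper realization space $(Z,\sd_Z)$ into which all $(X_i,\sd_i,x_i)$ and $(X,\sd,p)$ embed isometrically with $x_i\to p$ in $Z$. The uniform bound $\m_i(B_R(x_i))\le C(K,N,R)$ shows that, regarded as Radon measures on $Z$, the $\m_i$ are uniformly bounded on compact sets. A standard diagonal argument with the Banach--Alaoglu theorem applied to $C_c(Z)$ then yields a further subsequence along which $\m_i\rightharpoonup\m$ weakly in $Z$ for some locally finite Borel measure $\m$ supported on $X$. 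Thus $(X_i,\sd_i,\m_i,x_i)\to(X,\sd,\m,p)$ in the pmGH sense.

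Finally I would invoke the stability of $\RCD(K,N)$ under pmGH convergence (from \cite{GMS13}, after \cite{S1,S2,V,AGS2}) to conclude that the limit $(X,\sd,\m,p)$ is itself $\RCD(K,N)$, and check the normalization $\m(B_1(p))=1$: since $\m_i(B_1(x_i))=1$ for all $i$, weak convergence together with the fact that the limit measure assigns zero mass to $\partial B_r(p)$ for a.e.\ $r$ allows one to pass to the limit along radii $r\to 1^-$ and $r\to 1^+$. The main bookkeeping issue, which is the only delicate point, is organizing the two compactness arguments (for metrics and for measures) into a single diagonal subsequence so that metric and measure convergence are realized in the same ambient space $Z$.
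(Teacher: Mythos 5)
Your proposal is correct and follows exactly the route the paper takes: the paper itself gives no detailed argument but simply notes that Theorem \ref{stability} follows by combining the uniform local doubling of $\RCD(K,N)$ spaces with Gromov's precompactness theorem and the stability of the $\RCD(K,N)$ condition under pmGH convergence from \cite{GMS13}. Your additional bookkeeping (diagonal extraction, weak compactness of the measures in the realization space $Z$, and preservation of the normalization) is the standard way these cited facts are assembled and raises no issues.
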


Another key result in the theory of $\RCD(0,N)$ spaces is the Splitting Theorem. We recall that, on manifolds with non-negative Ricci curvature, this result was proved by Cheeger and Gromoll in \cite{SplittingCheegerGromoll}. The generalization to Ricci limit spaces is due to Cheeger and Colding with their Almost-Splitting Theorem \cite{Colding1}. On metric measure spaces, the result is due to Gigli \cite{gigli2013splittingtheoremnonsmoothcontext}. We highlight that Gigli's version of the Splitting Theorem also ensures the splitting of the measures, a key fact that we will use later on.

\begin{thm}
    Let $(X,\sd,\m)$ be an $\RCD(0,N)$ space which contains a line. Then, there exists an $\RCD(0,N-1)$ space $(Y,\sd_y,\m_y)$ such that $X=Y \times \bb{R}$ as metric measure spaces.
\end{thm}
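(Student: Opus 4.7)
The plan is to follow the classical Cheeger-Gromoll strategy, adapted to the $\RCD$ framework through Gigli's calculus on non-smooth spaces. Let $\gamma:\bb{R}\to X$ be a line and define the two Busemann functions
\[
b^{\pm}(x):=\lim_{t\to +\infty}\bigl(t-\sd(x,\gamma(\pm t))\bigr),
\]
which are well-defined, $1$-Lipschitz, and satisfy $b^+ + b^- \le 0$ with equality on $\gamma(\bb{R})$ by the triangle inequality.

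The first main step is to upgrade these Busemann functions to harmonic functions. Each $b^{\pm}$ is a monotone limit of distance-like functions, so the Laplacian comparison theorem available on $\RCD(0,N)$ spaces (which gives $\Delta\,\sd(\cdot,y)\le (N-1)/\sd(\cdot,y)$ as a measure) yields $\Delta b^{\pm}\ge 0$ in the sense of distributions. Since $b^+ + b^-$ is subharmonic, non-positive, and vanishes on the line, the strong maximum principle for $\RCD$ spaces forces $b^+ + b^- \equiv 0$, so both Busemann functions are harmonic. The delicate point here is that one needs the maximum principle in a non-smooth setting; this is available because $\RCD$ spaces carry a well-behaved heat semigroup and enjoy the Sobolev-to-Lipschitz property, which ensures $|\nabla b^{\pm}| \equiv 1$ $\m$-a.e.

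The second main step is to apply the Bochner inequality of the $\RCD(0,N)$ condition to $b:=b^+$. In weak form,
\[
\tfrac{1}{2}\Delta |\nabla b|^2 \;\ge\; |\Hess b|^2 + \langle\nabla b,\nabla\Delta b\rangle + \tfrac{(\Delta b)^2}{N}.
\]
Since $|\nabla b|^2 \equiv 1$ and $\Delta b = 0$, the left-hand side vanishes and the right-hand side is a sum of non-negative terms, forcing $\Hess b = 0$. Making this rigorous requires Gigli's second-order differential calculus, in particular the fact that harmonic functions with constant gradient norm actually admit a Hessian in $L^2$, and a suitable test-function approximation.

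The third and hardest step is to turn $\Hess b = 0$ into an actual isometric and measure-theoretic splitting. Vanishing Hessian makes $\nabla b$ a parallel vector field, so its gradient flow $(F_t)_{t\in\bb{R}}$ should give a one-parameter group of measure-preserving isometries, and the level set $Y:=\{b=0\}$ should inherit an $\RCD(0,N-1)$ structure. The metric splitting $X\cong Y\times\bb{R}$ via $(y,t)\mapsto F_t(y)$ essentially reproduces the Cheeger-Colding argument. The genuinely new difficulty in the $\RCD$ setting, and the obstacle I expect to be the main one, is showing that the reference measure $\m$ also splits as $\m_y\otimes\mathcal{L}^1$: this requires invariance of $\m$ under the flow of $\nabla b$, which in turn is deduced from $\Delta b = 0$ combined with a careful analysis of the continuity equation driven by $\nabla b$. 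Once this is achieved, the reduced space $(Y,\sd_y,\m_y)$ is $\RCD(0,N-1)$ by the tensorization/stability properties of the $\RCD$ condition.
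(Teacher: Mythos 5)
The paper does not actually prove this statement: it is Gigli's Splitting Theorem for $\RCD(0,N)$ spaces, quoted in the preliminaries from \cite{gigli2013splittingtheoremnonsmoothcontext}, so there is no internal proof to compare your attempt against. That said, your outline does reproduce the correct overall strategy of Gigli's proof (Busemann functions, harmonicity via Laplacian comparison plus a maximum principle, the Bochner inequality to kill the Hessian, and the gradient flow of $b$ to produce the splitting map and the splitting of the measure).

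However, as written the proposal is a roadmap rather than a proof, and the points you defer are exactly where the content lies. Concretely: (i) the identity $|\nabla b^{\pm}|=1$ $\m$-a.e.\ does not follow from the Sobolev-to-Lipschitz property, which goes in the opposite direction (from an a.e.\ gradient bound to a Lipschitz representative); one must argue separately, using $b^++b^-\equiv 0$ and the behaviour of the associated gradient flows, that the minimal weak upper gradient is not strictly smaller than the Lipschitz constant. (ii) Writing $\Hess b=0$ presupposes a second-order calculus on $X$; extracting a Hessian from the Bochner inequality and then upgrading ``parallel gradient'' to ``flow of measure-preserving isometries'' is not soft in the non-smooth setting --- it requires constructing the flow via regular Lagrangian flows for the continuity equation and proving it preserves both $\sd$ and $\m$, which is the bulk of Gigli's argument. (iii) The final claim that $(Y,\sd_y,\m_y)$ is $\RCD(0,N-1)$ does not follow from tensorization in the forward direction; what is needed is the converse implication (from $Y\times\bb{R}$ being $\RCD(0,N)$ to $Y$ being $\RCD(0,N-1)$), which required a separate argument and a separate treatment of the degenerate case $N<2$, where the factor reduces to a point. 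None of these steps is wrong in spirit, but each is a theorem in its own right, so the proposal cannot stand as a self-contained proof; for the purposes of this paper the correct move is simply to cite \cite{gigli2013splittingtheoremnonsmoothcontext}.
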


We conclude this brief overview with the definition of \emph{tangent cone at infinity} (or \emph{blow-down}) of an $\RCD(0,N)$ space.

\begin{definition} \label{DBlowdown}
    Let $(X,\sd,\m,x)$ be a pointed $\RCD(0,N)$ space, and consider a sequence $r_i \uparrow + \infty$. By Theorem \ref{stability}, up to a subsequence, the spaces $(X,\ssf{d}/r_i,\m(B_{r_i}(x))^{-1}\m,x)$ converge in pmGH sense to a limiting $\RCD(0,N)$ space $(X_\infty,\sd_\infty,\m_\infty,x_\infty)$. Such $X_\infty$ is called a \emph{tangent cone at infinity} (or \emph{blow-down}) of $X$.
\end{definition}

We recall that the tangent cone at infinity may not be unique and may not be a cone (see \cite{PerelmanTangent,Colding1}). On the other hand, if $(X,\sd)$ is a finite dimensional Alexandrov space with non-negative sectional curvature, then its tangent cone at infinity (which, in this case, is just a metric space) is a metric cone and it is unique (see, for instance, \cite[Theorem 2.11]{AntonelliPozzettaAlexandrov} and the references therein).

We now recall some facts on sets of finite perimeter and perimeter minimizing sets in $\RCD(K,N)$ spaces.
Sets of finite perimeter in metric measure spaces were studied in \cite{SomeFineProp,Am02,Mir,,AmbrosioDiMarinoBV}, among others. This theory was then further developed in the setting of $\RCD(K,N)$ spaces in \cite{ABS19,BPSrec,SemolaCutAndPaste}. 

\begin{definition}[Sets of locally finite perimeter]
Let $(X,\sd,\m)$ be a metric measure space and let $E \subset X$ be a Borel set. Given an open set $A \subset X$, the perimeter of $E$ in $A$ is defined as
$$
P(E, A) := \inf \left\{ \liminf_{k \to \infty} \int_A \mathsf{lip} f_k\, d\m \ : \ f_k \in \Lip_\mathrm{loc}(A)
, \ f_k \to \chi_E\ \mbox{ in } \sL^1_\mathrm{loc}(A,\m)\right\}.
$$
The set $E \subset X$ is said to have locally finite perimeter if $P(E, B_r(x)) < + \infty$ for all $x \in X$ and $r > 0$.
\end{definition}

\begin{definition} (Convergence in $\sL^1_{loc}$ sense)
    Let $(\X_i,\sd_i,\m_i,x_i)$ be a sequence of $\RCD(K,N)$ spaces converging in pmGH sense to $(Y,\sd,\m,y)$. The Borel sets $E_i \subset \X_i$ of finite measure converge in $\sL^1$ sense to a set $E \subset Y$ of finite measure if $\m_i(E_i) \to \m(E)$ and $1_{E_i}\m_i \to 1_E \m$ weakly in duality w.r.t. continuous compactly supported functions in the space $(Z,\sd_Z)$ realizing the pmGH convergence. 
    
    The Borel sets $E_i \subset \X_i$ converge in $\sL^1_{loc}$ sense to a set $E \subset Y$ if $E_i \cap B_r(x_i) \to E \cap B_r(y)$ in $\sL^1$ sense for every $r>0$.
\end{definition}

The next two propositions follow from \cite[Proposition 3.3 and Proposition 3.6]{ABS19}.
\begin{proposition} \label{P|compactness}
    Let $(\X_i,\sd_i,\m_i,x_i)$ be a sequence of $\RCD(K,N)$ spaces converging in pmGH sense to $(Y,\sd,\m,y)$. Let $E_i \subset X_i$ be sets with uniformly bounded measures such that $E_i \subset B_r(x) \subset Z$, where $(Z,\sd_Z)$ is the space realizing the convergence. If
    \[
    \sup_{i \in \bb{N}} P(E_i,X_i)< 
    + \infty,
    \]
    then, there exists a (non relabeled) subsequence and a set of finite perimeter $E \subset X$ such that $E_i \to E$ in $\sL^1$.
\end{proposition}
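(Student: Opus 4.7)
The plan is to view the uniform perimeter bound together with the uniform measure bound as giving equi-bounded BV norms for the indicators $1_{E_i}$, and then invoke a BV compactness principle for sequences of $\RCD(K,N)$ spaces converging in pmGH sense. First, embed all $(\X_i,\sd_i)$ isometrically into the ambient space $(Z,\sd_Z)$ realizing the convergence, and view the finite Borel measures $1_{E_i}\m_i$ as Radon measures on $Z$. By hypothesis each is supported in the fixed ball $B_r(x)\subset Z$ and has uniformly bounded total mass $\m_i(E_i)$. By weak compactness of Radon measures with uniformly bounded mass and common compact support, up to a subsequence one has $1_{E_i}\m_i\to\mu$ weakly in duality with continuous compactly supported functions on $Z$, for some finite Radon measure $\mu$ supported in $\overline{B_r(x)}$.

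Next, the plan is to upgrade this weak convergence of measures to strong $\sL^1$ convergence of functions, using the BV structure. The function $1_{E_i}$ has total variation equal to $P(E_i,\X_i)$ (uniformly bounded by hypothesis) and $\sL^1$-norm equal to $\m_i(E_i)$ (also uniformly bounded). Since $\RCD(K,N)$ spaces are uniformly locally doubling and satisfy a uniform local $(1,1)$-Poincar\'e inequality, the Rellich-Kondrachov compactness theorem for BV functions extends to pmGH-converging sequences of $\RCD(K,N)$ spaces: a sequence of BV functions with uniformly bounded BV norm and common bounded support admits a further subsequence converging strongly in $\sL^1$ along the pmGH convergence to some BV function $f$ on $Y$. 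Passing to an a.e. convergent further subsequence, the fact that each $1_{E_i}$ is $\{0,1\}$-valued forces $f\in\{0,1\}$ $\m$-a.e., so $f=1_E$ for a Borel set $E\subset Y$. The strong $\sL^1$ convergence together with the common bounded support yields both $\m_i(E_i)\to\m(E)$ and $1_{E_i}\m_i\to 1_E\m$ weakly on $Z$, i.e.\ $E_i\to E$ in $\sL^1$ as required. Lower semicontinuity of the perimeter under pmGH convergence (also in \cite{ABS19}) then gives $P(E,Y)\leq\liminf_i P(E_i,\X_i)<+\infty$, so $E$ has finite perimeter.

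The main obstacle is the BV Rellich-Kondrachov step in the varying-space setting. On a single doubling $+$ Poincar\'e metric measure space this is classical (going back to Miranda and \cite{AmbrosioDiMarinoBV}), but here both the base spaces and the reference measures change along the sequence. The key technical point is to exploit the Poincar\'e inequality uniformly in $i$ to convert the total variation bound into equi-integrability of $1_{E_i}$ on $Z$, and to then combine this equi-integrability with the weak convergence above to pass to a strong $\sL^1$ limit on the common ambient space $(Z,\sd_Z)$. This is exactly what \cite[Propositions 3.3 and 3.6]{ABS19} carry out, and we would appeal to their statements directly rather than reproving the compactness machinery.
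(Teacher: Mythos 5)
Your proposal is correct and ultimately takes the same route as the paper: the paper offers no proof of this proposition at all, simply stating that it (together with the subsequent lower-semicontinuity statement) follows from \cite[Propositions 3.3 and 3.6]{ABS19}, which is exactly the appeal you make after sketching the underlying BV-compactness machinery. Your outline of what lies behind that citation (uniform doubling and Poincar\'e, total variation bounds, weak compactness in the ambient space $Z$, upgrading to strong $\sL^1$ convergence) is an accurate description of the argument carried out there.
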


\begin{proposition} \label{P|lsc variation aperti}
    Let $(\X_i,\sd_i,\m_i,x_i)$ be a sequence of $\RCD(K,N)$ spaces converging in pmGH sense to $(\X,\sd,\m,x)$. 
    If $E \subset X$, and $E_i \subset X_i$ is a sequence such that $E_i \to E$ in $\sL^1$,
    then for every open set $A \subset Z$, where $(Z,\sd_Z)$ is the metric space realising the convergence, we have
    \[
    P(E,A) \leq \liminf_{i \to + \infty} P(E_i,A).
    \]
\end{proposition}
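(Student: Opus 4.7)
The plan is to unfold the relaxation definition of $P(E_i,A)$, promote each approximating Lipschitz function to the common ambient space $Z$ realizing the pmGH convergence, and then extract a Lipschitz approximant for $\chi_E$ on $X$ whose slope integral controls $\liminf_i P(E_i,A)$.

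For each $i$, the definition of $P(E_i,A)$ yields functions $f_{i,k} \in \Lip_{loc}(A\cap X_i)$ with $f_{i,k}\to \chi_{E_i}$ in $\sL^1_{loc}(A,\m_i)$ and $\liminf_k \int_A \lip(f_{i,k})\, d\m_i \le P(E_i,A)+1/i$. Inside $Z$, a McShane extension promotes each $f_{i,k}$ to a locally Lipschitz function on $A\subset Z$ without enlarging the slope on $X_i$. A diagonal extraction then produces a single sequence $g_i$ of locally Lipschitz functions on $A\subset Z$ whose restrictions to $X_i$ still approximate $\chi_{E_i}$ in $\sL^1_{loc}(A,\m_i)$ and satisfy $\int_A \lip(g_i)\, d\m_i \le P(E_i,A)+2/i$. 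Passing to a subsequence realizing $\liminf_i P(E_i,A)$, one may assume these integrals are uniformly bounded.

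Next I would pass to the limit. The slope measures $\lip(g_i)\,\m_i$ on $Z$ are uniformly bounded on sets of the form $A\cap \overline{B_r(x)}$, hence weak-$*$ subconverge to a measure $\mu$ supported on $\overline A\cap X$ with $\mu(A)\le \liminf_i P(E_i,A)$ by the Portmanteau property on open sets. Using the uniform local slope bounds together with the weak convergence $\m_i\to\m$ in $Z$ and the assumed $\sL^1$-convergence $E_i\to E$, one extracts (after a suitable mollification, e.g.\ by the $\RCD$ heat flow, or an Ascoli--Arzelà argument on compact pieces) a locally Lipschitz function $h$ on $A\cap X$ with $h=\chi_E$ $\m$-a.e. on $A$ and whose slope satisfies $\lip(h)\,\m\le \mu$ on $A$ by lower semicontinuity of the slope under pmGH convergence in the $\RCD$ setting (cf.\ \cite{ABS19}). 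Using $h$ as an admissible approximant in the relaxation defining $P(E,A)$ then gives
\[
P(E,A) \le \int_A \lip(h)\, d\m \le \mu(A) \le \liminf_{i\to +\infty} P(E_i,A).
\]

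The main obstacle is the transfer of slope information from the moving spaces $X_i$ to the limit $X$: McShane extensions preserve slopes along $X_i$ but not along $X$, so one has to work either at the level of slope measures on the common $Z$ (and exploit lower semicontinuity on the open set $A$) or replace the approximants by heat-flow mollifications that behave well under pmGH convergence. Both routes are available in the $\RCD(K,N)$ category thanks to the Cheeger-type calculus and the stability of minimal relaxed slopes, which is precisely the content invoked from \cite{ABS19}.
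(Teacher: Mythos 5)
The paper does not actually prove this proposition: it is imported verbatim from \cite[Propositions 3.3 and 3.6]{ABS19}, so there is no internal argument to compare against. Judged on its own terms, your proposal has a genuine gap at its final and decisive step. You claim to extract ``a locally Lipschitz function $h$ on $A\cap X$ with $h=\chi_E$ $\m$-a.e.\ on $A$''. Such an $h$ cannot exist unless $E$ is locally trivial in $A$: a locally Lipschitz function that agrees $\m$-a.e.\ with a characteristic function forces $E$ to have density $0$ or $1$ locally everywhere, i.e.\ $P(E,A)=0$. The relaxation defining $P(E,A)$ requires a \emph{sequence} of Lipschitz approximants converging to $\chi_E$ in $\sL^1_{loc}(A,\m)$, and producing such a sequence with slope integrals controlled by $\mu(A)$ is exactly the content of the proposition, not a consequence of generalities.

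Even if one repairs this to a sequence, the mechanism you propose does not transfer the slope information. The McShane extension of $f_{i,k}$ from $X_i$ to $Z$ preserves the restriction to $X_i$ and the \emph{global} Lipschitz constant, but the pointwise slope of the extension along $X\subset Z$ (which is what enters the relaxation on the limit space) is in no way dominated by $\lip f_{i,k}$ computed along $X_i$; consequently the weak-$*$ limit $\mu$ of $\lip(g_i)\,\m_i$ need not dominate the slope measure of any admissible approximant of $\chi_E$ on $X$. Your appeal to ``lower semicontinuity of the slope under pmGH convergence in the $\RCD$ setting (cf.\ \cite{ABS19})'' is circular, since that joint lower semicontinuity across varying spaces \emph{is} the statement being proved. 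The actual proof in the literature (Ambrosio--Honda, then localized in \cite{ABS19}) does not use ambient Lipschitz extensions at all: it regularizes $\chi_{E_i}$ by the heat semigroups of the $X_i$, uses the Bakry--\'Emery contraction estimate and the Mosco/stability theory for Cheeger energies under pmGH convergence to pass the gradient bounds to the limit, and only then localizes to open sets. A correct write-up would have to follow that route (or cite it), rather than the extension-plus-Arzel\`a--Ascoli scheme sketched here. A further, smaller issue of the same nature: $\sL^1_{loc}(A,\m_i)$-closeness of $g_i$ to $\chi_{E_i}$ together with $E_i\to E$ only yields convergence of $g_i$ to $\chi_E$ in the varying-measure sense, not convergence of the restrictions $g_i|_X$ in $\sL^1_{loc}(A,\m)$ on the fixed limit space, which is what the definition of $P(E,A)$ requires.
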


We now consider sets minimizing the perimeter in $\RCD(K,N)$ spaces.
Structural properties of perimeter minimizing sets in $\RCD(K,N)$ spaces were studied in \cite{Weak}, while other properties were then investigated in \cite{FSM,C1,C2}.

\begin{definition}[Perimeter minimizing sets]
    Let $(X,\sd,\m)$ be an $\RCD(K,N)$ space. A set of locally finite perimeter $E \subset X$ is perimeter minimizing if, for every bounded open set $U \subset X$, and for every set $C \subset X$ with $C \Delta E \ssubset U$, it holds $P(E,U) \leq P(C,U)$.

    Analogously, the set $E$ is sub-minimizing if the previous condition holds for any $C \subset X$ with $C \Delta E \subset \subset U$ and $C \subset E$.
    Finally, the set $E$ is super-minimizing if the previous condition holds for any $C \subset X$ with $C \Delta E \subset \subset U$ and $C \supset E$.
\end{definition}

The proof of the next lemma can be found in \cite[Proposition 1.2]{DePhilippsMagistrale} in the Euclidean setting. The same argument works for metric measure spaces.

\begin{lemma} \label{L4}
    Let $(X,\sd,\m)$ be an $\RCD(K,N)$ space. Let $E \subset X$ be a set which is sub-minimizing and super-minimizing. Then, $E$ is perimeter minimizing.
\end{lemma}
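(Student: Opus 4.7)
The plan is to reduce the claim to the submodularity of the perimeter. Recall that, for any two Borel sets $A, B \subset X$ of locally finite perimeter and any open set $U \subset X$, one has
\[
P(A \cup B, U) + P(A \cap B, U) \leq P(A, U) + P(B, U).
\]
This inequality is standard in the theory of sets of finite perimeter in metric measure spaces: using locally Lipschitz approximations $f_k \to \chi_A$ and $g_k \to \chi_B$ in $\sL^1_{loc}(U,\m)$, one has $f_k \vee g_k \to \chi_{A \cup B}$ and $f_k \wedge g_k \to \chi_{A \cap B}$ in the same sense, and the pointwise bound $\lip(f_k \vee g_k) + \lip(f_k \wedge g_k) \leq \lip(f_k) + \lip(g_k)$ passes to the relaxed functional defining $P$.

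Given a competitor $C \subset X$ with $C \Delta E \ssubset U$, I would then set $C_+ := C \cup E$ and $C_- := C \cap E$. Since $C_+ \supset E$ and $C_+ \Delta E = C \setminus E \subset C \Delta E \ssubset U$, the super-minimality of $E$ applied to $C_+$ gives $P(E,U) \leq P(C_+, U)$. Symmetrically, $C_- \subset E$ and $C_- \Delta E = E \setminus C \subset C \Delta E \ssubset U$, so the sub-minimality of $E$ applied to $C_-$ gives $P(E,U) \leq P(C_-, U)$. Summing the two estimates and combining with the submodularity inequality above applied to $A = C$ and $B = E$, I would arrive at
\[
2\, P(E,U) \;\leq\; P(C_+, U) + P(C_-, U) \;\leq\; P(C, U) + P(E, U).
\]
Since $E$ has locally finite perimeter and $U$ is bounded, $P(E,U)$ is finite and can be cancelled from both sides, yielding $P(E, U) \leq P(C, U)$, which is the desired minimality (noting that if $P(C,U) = + \infty$ there is nothing to prove).

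The entire argument is driven by the elementary set-theoretic decomposition $(C, E) \mapsto (C \cup E, C \cap E)$, which trades an arbitrary competitor for a pair to which the super- and sub-minimality hypotheses directly apply. The only non-tautological ingredient is the submodularity of the perimeter, and I anticipate no real obstacle there: the pointwise identity $\lip(f \vee g) + \lip(f \wedge g) \leq \lip(f) + \lip(g)$ is immediate, and its passage to the relaxed perimeter functional via approximation in $\sL^1_{loc}$ is a routine step that is well documented in the literature on $\BV$ functions in metric measure spaces.
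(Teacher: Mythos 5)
Your proof is correct and is essentially the paper's own argument: the paper does not prove Lemma \ref{L4} directly but defers to \cite[Proposition 1.2]{DePhilippsMagistrale}, whose proof is exactly your decomposition of a competitor $C$ into $C\cup E$ and $C\cap E$, combined with the submodularity of the perimeter (which indeed holds in metric measure spaces, see \cite{Mir}, and the cancellation of $P(E,U)<+\infty$ is legitimate since $E$ has locally finite perimeter). One small caveat: the pointwise bound $\lip(f\vee g)(x)+\lip(f\wedge g)(x)\leq \lip(f)(x)+\lip(g)(x)$ that you call immediate can actually fail at points where $f(x)=g(x)$ (e.g.\ $f(y)=y\sin(\log|y|)$, $g\equiv 0$ at $y=0$ gives $2>1$), so the passage to the relaxed functional requires the slightly more careful argument of the cited literature; since you invoke submodularity as a known result rather than relying on this sketch, the proof stands.
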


The next theorem comes from \cite[Theorem $4.2$ and Lemma $5.1$]{Dens}. We state the result for $\RCD(0,N)$ spaces, although it holds in the more general setting of PI spaces.

\begin{thm} \label{T7}
    Let $(\X,\sd,\m)$ be an $\RCD(0,N)$ space. There exist $C,\gamma_0>0$ depending only on $N$ such that the following hold. If $E \subset \X$ is a perimeter minimizing set, then, up to modifying $E$ on an $\m$-negligible set, for any $x \in \partial E$ and $r>0$, it holds
    \[
    \frac{\m(E \cap B_r(x))}{\m(B_r(x))}>\gamma_0, \quad \frac{\m(B_r(x) \setminus E)}{\m(B_r(x))}>\gamma_0
    \]
    and
    \begin{equation}\label{eq:perimeterestimates}
        \frac{\m(B_r(x))}{Cr} \leq P(E,B_r(x)) \leq \frac{C\m(B_r(x))}{r}.
    \end{equation}
\end{thm}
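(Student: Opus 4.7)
The plan is to follow the classical De Giorgi-style density/monotonicity argument, transposed to the $\RCD(0,N)$ setting via slicing and the relative isoperimetric inequality available on PI spaces. Throughout, I abbreviate $V(s) := \m(E \cap B_s(x))$ and $W(s) := \m(B_s(x) \setminus E)$, both of which are non-decreasing BV functions of $s$.

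\textbf{Upper perimeter bound.} For a.e.\ $s > 0$, I would use the competitor $E \setminus B_s(x)$ in the minimality condition on the open set $B_{s+\varepsilon}(x)$: since $E \,\Delta\, (E \setminus B_s(x)) = E \cap B_s(x) \ssubset B_{s+\varepsilon}(x)$ for small $\varepsilon > 0$,
\[
P(E, B_{s+\varepsilon}(x)) \leq P(E \setminus B_s(x), B_{s+\varepsilon}(x)).
\]
Letting $\varepsilon \downarrow 0$ and isolating the contribution on $\partial B_s(x)$, the BV coarea formula applied to $\sd(x,\cdot)$ shows that the extra perimeter generated by the cut is, for a.e.\ $s$, dominated by the slice mass of $E$ on $\partial B_s(x)$, itself bounded by $V'(s)$. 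Hence $P(E, B_s(x)) \leq V'(s)$ a.e. Integrating between $r/2$ and $r$, using the monotonicity of $s \mapsto P(E, B_s(x))$ together with Bishop-Gromov doubling of $\m$, yields $P(E, B_r(x)) \leq C \m(B_r(x))/r$ with a constant depending only on $N$.

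\textbf{Lower density and lower perimeter bound.} The symmetric argument, with $E \cup B_s(x)$ as competitor, gives the analogous slicing inequality $P(E, B_s(x)) \leq W'(s)$. The second analytic input is the relative isoperimetric inequality on $\RCD(0,N)$ spaces (seen as PI spaces): there exist $C_N, \tau \geq 1$ such that
\[
\min(V(r), W(r)) \leq C_N\, r\, P(E, B_{\tau r}(x)) \qquad \forall\, r > 0.
\]
For $x \in \partial E$, after the $\m$-negligible modification in the statement, both $V(r)$ and $W(r)$ are strictly positive for all $r > 0$. Combining the relative isoperimetric inequality with whichever slicing inequality applies to the smaller of $V(r), W(r)$ yields a functional inequality of the form $f(r) \leq C_N\, r\, f'(\tau r)$ with $f \in \{V,W\}$, and a standard Gronwall/iteration argument, paired with doubling of $\m$, produces the uniform lower bound $V(r), W(r) \geq \gamma_0\, \m(B_r(x))$. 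Re-inserting this density estimate into the relative isoperimetric inequality and again invoking doubling of $\m$ delivers the lower perimeter bound $P(E, B_r(x)) \geq \m(B_r(x))/(Cr)$.

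\textbf{Main obstacle.} The most delicate step is the rigorous derivation of $P(E, B_s(x)) \leq V'(s)$ for a.e.\ $s$: in the $\RCD$ setting there is no smooth distance function, and the inequality must be justified through the BV coarea formula combined with the fact that $\m$-almost every sphere is a ``good slice'' for $E$. This is precisely the content of the slicing and fine structure results underlying \cite{ABS19,Weak,Dens}, which I would invoke directly rather than reprove. The iteration step is mildly subtle because the ``delay'' factor $\tau$ in $f'(\tau r)$ obstructs a direct integration, but this is handled by a dyadic rescaling together with the Bishop-Gromov doubling of $\m$.
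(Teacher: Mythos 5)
The paper does not prove this statement at all: it is quoted verbatim from the literature, with the sentence preceding it attributing the result to \cite[Theorem 4.2 and Lemma 5.1]{Dens} (density and perimeter estimates for quasiminimal boundaries in PI spaces). So there is no in-paper proof to compare against; what you have reconstructed is essentially the standard De Giorgi-type argument that underlies that reference, and your plan is sound. The three ingredients you identify are the right ones: (i) the cut-and-paste comparison with $E\setminus B_s(x)$ and $E\cup B_s(x)$ together with the coarea/slicing inequality $P(E,B_s(x))\leq \min(V'(s),W'(s))$ for a.e.\ $s$ (this is exactly the content of the localization lemmas in \cite{Mir,ABS19}); (ii) the relative isoperimetric inequality coming from the doubling and Poincar\'e properties of $\RCD(0,N)$ spaces; and (iii) an iteration to convert the resulting differential inequality into uniform density bounds, after which the lower perimeter bound follows by feeding the densities back into the isoperimetric inequality. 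Two small remarks. First, the ``delay'' factor $\tau$ you worry about is not actually an obstacle here: $\RCD(0,N)$ spaces are geodesic and doubling, so by Haj\l asz--Koskela self-improvement the $(1,1)$-Poincar\'e (and hence the relative isoperimetric) inequality holds with dilation $\tau=1$, and the differential inequality $f(s)\leq C s f'(s)$ integrates directly. Second, the phrase ``up to modifying $E$ on an $\m$-negligible set'' should be made precise by taking the representative whose topological boundary equals the essential boundary; only then is your claim that $V(r),W(r)>0$ for every $x\in\partial E$ and $r>0$ legitimate as the starting point of the iteration. With those two points made explicit, your outline matches the proof in the cited source.
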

\noindent From the previous result one deduces that locally perimeter minimizing sets admit both a closed and an open representative, and these have the same boundary which in addition is $\m$-negligible. Whenever we consider the boundary of a locally perimeter minimizing set, we will implicitly be referring to the boundary of its closed (or open) representative.

The next proposition is taken from \cite[Theorem $2.43$]{Weak}.

\begin{proposition} \label{P26}
Let $(\X_i,\sd_i,\m_i,x
_i)$ be a sequence of $\RCD(K,N)$ spaces converging in pmGH sense to $(Y,\sd,\m,y)$.
    Let $E_i \subset \X_i$ be a sequence of perimeter minimizing sets converging in $\sL^1_{loc}$ sense to $E \subset Y$. Then, $E$ is perimeter minimizing and, in the metric space realizing the convergence, it holds $\partial E_i \to \partial F$ in Kuratowski sense.
\end{proposition}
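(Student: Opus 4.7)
I would prove the two assertions separately: first the minimality of $E$ via a recovery-plus-surgery argument, then Kuratowski convergence via the uniform density estimates of Theorem \ref{T7}.

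\textbf{Minimality.} Fix a bounded open set $U \subset Y$ and a competitor $C \subset Y$ with $C \Delta E \ssubset U$. I would first replace $C$ by $E$ outside $U$ and choose an intermediate open set $V$ with $C \Delta E \ssubset V \ssubset U$. The core step is to construct sets $C_i \subset X_i$ of finite perimeter with $C_i \to C$ in $\sL^1_{loc}$ and
\[
\limsup_{i \to +\infty} P(C_i, W) \leq P(C, \bar W)
\]
for a suitable open set $W$ with $V \ssubset W \ssubset U$. To produce $C_i$, I would approximate $\chi_C$ on $Y$ by Lipschitz functions $f_k$ with $\int \lip(f_k) \, d\m$ converging to $P(C, Y)$ (using the identification of $\BV$ as the $\sL^1$-relaxation of $\lip$ on $\RCD(K,N)$ spaces), extend each $f_k$ to the realization space $Z$ via McShane, restrict to $X_i$, and apply the coarea formula combined with a diagonal argument to select appropriate superlevel sets. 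This is essentially the content of the proof of \cite[Theorem 2.43]{Weak}.

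Having produced the recovery sequence, I would then perform surgery by setting $\tilde{C}_i := (C_i \cap W) \cup (E_i \setminus W)$, so that $\tilde{C}_i \Delta E_i \ssubset U$ and the minimality of $E_i$ yields $P(E_i, U) \leq P(\tilde{C}_i, U)$. The perimeter of $\tilde{C}_i$ in $U$ decomposes as $P(C_i, W) + P(E_i, U \setminus \bar W)$ plus a surface term across $\partial W$ coming from the mismatch between $C_i$ and $E_i$. To make this surface term negligible, I would choose $W$ of the form $\{\sd(\cdot, V) < s\}$ for a generic $s$, so that a slicing argument bounds the surface term by the $\sL^1$-distance between $C_i$ and $E_i$ in a neighborhood of $\partial W$, which tends to zero since $C_i \to C$ and $C = E$ outside $V$. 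The conclusion $P(E, U) \leq P(C, U)$ then follows by combining the lower semicontinuity of perimeter (Proposition \ref{P|lsc variation aperti}) on $V$ with the recovery upper bound on $W$, and letting $V \uparrow U$.

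\textbf{Kuratowski convergence.} For $p \in \partial E$, Theorem \ref{T7} applied to the minimizing limit $E$ gives $\m(E \cap B_r(p)) > 0$ and $\m(B_r(p) \setminus E) > 0$ for every $r > 0$. Choosing any $q_i \in X_i$ with $q_i \to p$ in $Z$, the $\sL^1_{loc}$ convergence of $E_i \to E$ together with the uniform Bishop-Gromov bound force both $E_i$ and $E_i^c$ to have positive $\m_i$-measure in $B_r(q_i)$ for $i$ large, so $\partial E_i \cap B_r(q_i) \neq \emptyset$, and a diagonal argument produces $p_i \in \partial E_i$ with $p_i \to p$. Conversely, if $p_i \in \partial E_i$ and $p_i \to p$, the uniform density bounds of Theorem \ref{T7} applied to each $E_i$ at $p_i$ survive the passage to the limit along $\sL^1_{loc}$ convergence and the weak convergence of $\m_i$, yielding $\m(E \cap B_r(p)) > 0$ and $\m(B_r(p) \setminus E) > 0$ for every $r > 0$, hence $p \in \partial E$.

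\textbf{Main obstacle.} The technical crux is the recovery-plus-surgery construction: one must transfer a set of finite perimeter from the limit $Y$ to each $X_i$ while controlling the asymptotic perimeter, then fuse it with $E_i$ without incurring extra surface cost in the limit. The first task relies on the Lipschitz density of $\BV$ on $\RCD(K,N)$ spaces, while the second relies on a Federer-type coarea/slicing argument to choose the transition region $W$ so that the surface contribution across $\partial W$ is asymptotically negligible. Once this is in hand, the rest of the proof follows by routine applications of lower semicontinuity and the density estimates.
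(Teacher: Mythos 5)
The paper does not prove this proposition: it is quoted verbatim from \cite[Theorem 2.43]{Weak} (and the ``$\partial F$'' in the statement is a typo for ``$\partial E$''). Your outline correctly reproduces the standard strategy behind that result --- a recovery sequence for the competitor combined with a cut-and-paste surgery whose interface term is killed by a coarea/slicing choice of the transition annulus, plus the uniform density and perimeter estimates of Theorem \ref{T7} for the two inclusions of Kuratowski convergence --- so it is consistent with the cited proof rather than an alternative to it.
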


We conclude this section by stating and proving two technical lemmas which will be used to prove our main result.

\begin{lemma} \label{L7}
    Let $U \subset \bb{R}^n$ be an open convex set. Let $f \in \sL^1_{loc}(\bar{U})$ be a function such that $(\bar U,\sd,fd\lambda^n)$ is an $\RCD(0,N)$ space. Then $f \in \Lip_{loc}(U)$.
    \begin{proof}
        Let $\nu \in S^{n-1}$ be fixed. By \cite{MondinoInventiones}, for $\aH^{n-1}$-a.e. line $l$ parallel to $\nu$, the restricted function $f_l:l \cap U \to \bb{R}_+$ is a $\ssf{CD}(0,n)$ density on $l \cap U$. In particular, for every such line $l$, it follows that $f_l$ has a locally Lipschitz representative 
        and that $f_l^{1/(n-1)}$ is concave.

        We claim that for every open set $K \ssubset U$ it holds that $f \in \sL^{\infty}(K)$. Let $x_0 \in K$ be a Lebesgue point of $f$. Let $i \in \bb{N}$ be such that there is a Lebesgue point $x_i \in K$ of $f$ such that $f(x_i) \geq i$. Consider $\nu_i:=(x_i-x_0)/|x_i-x_0|$ and consider the restrictions of $f$ to lines parallel to $\nu_i$. Given $\varepsilon>0$ small, since $x_0$ and $x_i$ are Lebesgue points, there exists $r>0$ such that
        \[
        \lambda^n(\{x \in B_r(x_0):f(x) \leq f(x_0)+1\}) \geq (1-\varepsilon)\lambda^n(B_r(x_0))
        \]
        and
        \[
        \lambda^n(\{x \in B_r(x_i):f(x) \geq i-1\}) \geq (1-\varepsilon)\lambda^n(B_r(x_i)).
        \]
        Hence, there exists a set $A$ of lines parallel to $\nu_i$ of strictly positive $\aH^{n-1}$ measure such that, for every $l \in A$, it holds
        \[
        \lambda^1(l \cap \{x \in B_\varepsilon(x_0):f(x) \leq f(x_0)+1\}) >0
        \]
        and
        \[
        \lambda^1(l \cap \{x \in B_\varepsilon(x_i):f(x) \geq i-1\}) >0.
        \]
        Let $l \in A$, since the Lipschitz representative of $f_l^{1/(n-1)}$ is positive, concave, and it attains a value $\leq f(x_0)+1$ and a value $\geq i-1$ on $K \cap l$, then $i \leq c(K,U,f)$.
        This proves that $f$ restricted to its Lebesgue points in $K$ is bounded above by a constant, so that $f \in \sL^\infty(K)$.

        We now prove that $f$ is Locally Lipschitz in $K$. Let $\nu \in S^{n-1}$ be fixed and let $l$ be any line parallel to $\nu$ such that $f_l^{1/(n-1)}$ is positive and concave in $l \cap U$ and bounded in $l \cap K$. Since $l \cap U \ssubset l\cap K$, the positivity and concavity of $f_l^{1/(n-1)}$ guarantee that there is a constant $c_{K}>0$ such that, if $|(f_l^{1/(n-1)})'|(x) \geq m$ for some $m>0$ and some $x \in l \cap K$, then $f_l^{1/(n-1)}(x) \geq c_Km$. Therefore, using that $f_l^{1/(n-1)}$ is bounded in $l \cap K$, we deduce that $(f_l^{1/(n-1))})'$ is itself bounded in $l \cap K$. 
    By \cite[Theorem 4.21]{EvansGariepy}, it follows that $f_l^{1/(n-1)} \in \W^{1,\infty}_{loc}(K)$. Since $K \ssubset U$ was arbitrary, it holds $f_l^{1/(n-1)} \in \W^{1,\infty}_{loc}(U)$, concluding the proof.
    \end{proof}
\end{lemma}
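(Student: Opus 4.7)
The plan is to use the one-dimensional localization (needle decomposition) for $\RCD(0,N)$ spaces, which in this convex Euclidean setting reduces the regularity of $f$ to concavity statements on 1D slices. Fixing a direction $\nu\in S^{n-1}$, the disintegration of $f\,d\lambda^n$ along the 1-Lipschitz function $x\mapsto \langle x,\nu\rangle$ yields, for $\aH^{n-1}$-a.e. line $l$ parallel to $\nu$, a conditional density agreeing with $f|_{l\cap U}$ up to a positive multiplicative constant and satisfying the 1D $\CD(0,N)$ condition. Consequently, $f_l:=f|_{l\cap U}$ admits a continuous strictly positive representative such that $f_l^{1/(N-1)}$ is concave on $l\cap U$.

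First I would establish local boundedness, $f\in\sL^\infty_{loc}(U)$. Fix a compact $K\ssubset U$; by convexity of $U$ there exists $\delta=\delta(K,U)>0$ so that every segment $l\cap U$ meeting $K$ extends at least $\delta$ beyond $l\cap K$ on both sides. Suppose by contradiction $f|_K$ is unbounded: choose Lebesgue points $x_0,x_i\in K$ with $f(x_0)$ uniformly bounded and $f(x_i)\to+\infty$. Since $x_0$ and $x_i$ are Lebesgue points, on a positive-$\aH^{n-1}$-measure family of lines $l$ parallel to $\nu_i:=(x_i-x_0)/|x_i-x_0|$, the density $f_l$ takes values close to $f(x_0)$ near $x_0$ and close to $f(x_i)$ near $x_i$. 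Concavity and nonnegativity of $g:=f_l^{1/(N-1)}$ on an interval extending $\delta$ beyond $l\cap K$ then force $g(x_i)\leq c(K,U)\,g(x_0)$ via an elementary chord estimate, contradicting $f(x_i)\to+\infty$.

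With $f$ locally bounded on $K$ by some $M$, I would upgrade to local Lipschitz regularity by slicing in every direction. For $\aH^{n-1}$-a.e.\ line $l$ parallel to any $\nu\in S^{n-1}$, the function $g_l:=f_l^{1/(N-1)}$ is concave, positive, and bounded by $M^{1/(N-1)}$ on $l\cap K$; since $g_l$ remains nonnegative on an interval extending $\delta$ beyond $l\cap K$, a standard property of positive concave functions gives $|g_l'|\leq M^{1/(N-1)}/\delta$ on $l\cap K$. Varying $\nu$, $g$ has uniformly bounded weak directional derivatives on $K$, and the Evans-Gariepy characterization ($\W^{1,\infty}_{loc}=\Lip_{loc}$) yields $g\in\Lip_{loc}(U)$. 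Finally, the same concavity argument also bounds $g$ uniformly away from zero on compact subsets of $U$, so $f=g^{N-1}$ is itself locally Lipschitz.

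The hard part is the measure-theoretic gap between needle-wise concavity, which holds only for $\aH^{n-1}$-a.e.\ line, and the pointwise regularity we seek. I would handle this by working consistently with Lebesgue points of $f$ and positive-measure families of lines, and by exploiting that concave 1D densities admit canonical continuous representatives. The passage from $f^{1/(N-1)}\in\Lip_{loc}$ back to $f\in\Lip_{loc}$ is then routine once strict positivity of the 1D densities is in hand.
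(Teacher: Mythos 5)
Your proposal follows essentially the same route as the paper's proof: one-dimensional localization along lines parallel to a fixed direction (so that for $\aH^{n-1}$-a.e.\ line the restricted density is a one-dimensional $\CD(0,N)$ density with concave $(N-1)$-th root), a Lebesgue-point contradiction argument on a positive-measure family of lines to get local boundedness, concavity-based derivative bounds on segments extending a definite distance beyond the compact set, and the Evans--Gariepy identification of $\W^{1,\infty}_{loc}$ with $\Lip_{loc}$. The only (harmless) difference is that you make explicit the final passage from $f^{1/(N-1)}\in\Lip_{loc}$ back to $f\in\Lip_{loc}$, which the paper leaves implicit.
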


\begin{lemma} \label{L6}
    Consider a cone $C([0,l])$ for some $0<l\leq \pi$, which, equipped with a measure $\m=f\aH^2$, with $f\in \Lip_{loc}\big(\mathrm{int}(C([0,l]))\big)$, is an $\RCD(0,N)$ space. Let $Y=C([a,b]) \subset Z$ for some $0 \leq a < b \leq l$ and let $p$ be the tip of $C([0,l])$. Denoting by $P_\m(\cdot, \cdot)$ perimeters in $C([0,l])$ w.r.t. the measure $\m$,  for every $s>0$ it holds that 
    \[
         P_{\m}(Y, B_s(p)) =
         \begin{cases}
             \displaystyle \int_0^s f_{|C(\{a\})}(z)+f_{|C(\{b\})}(z) \, dz  &\text{if }a\neq 0, b\neq l,\\
             \displaystyle\int_0^s f_{|C(\{a\})}(z) \, dz &\text{if }a\neq 0, b= l,\\
             \displaystyle\int_0^s f_{|C(\{b\})}(z) \, dz&\text{if }a= 0, b\neq l.
         \end{cases}
         \]
\end{lemma}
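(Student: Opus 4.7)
The plan is to compute $P_{\m}(Y, B_s(p))$ directly from the definition, producing matching upper and lower bounds. I describe the case $0 < a < b < l$ in detail; the two remaining cases follow by the same proof upon dropping the corresponding boundary term, noting that the ray $C(\{0\})$ or $C(\{l\})$ lies on the topological boundary of $C([0,l])$ and therefore is not part of $\partial Y$ inside $C([0,l])$. Since $l \leq \pi$, I identify $C([0,l])$ isometrically with a closed Euclidean sector of opening $l$ in $\bb{R}^2$, and work in polar coordinates $(z,\theta)\in [0,\infty)\times [0,l]$ in which the metric reads $dz^2 + z^2\, d\theta^2$, $d\aH^2 = z\, dz\, d\theta$, and $B_s(p) = \{z<s\}$.

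For the upper bound, set $\eta_k(\theta):=\max\{0,\, 1 - k\,\mathrm{dist}(\theta,[a,b])\}$ and $f_k(z,\theta):=\eta_k(\theta)$. Then $f_k\in\Lip_{loc}(B_s(p))$, $f_k\to\chi_Y$ in $\sL^1_{loc}(\m)$, and since $f_k$ depends only on $\theta$ one has $\lip(f_k)(z,\theta)=|\eta_k'(\theta)|/z$ at interior points. Therefore,
\[
\int_{B_s(p)} \lip(f_k)\, d\m \;=\; \int_0^s\!\!\int_{(a-1/k,\, a)\cup(b,\, b+1/k)} k\, f(z,\theta)\, d\theta\, dz \;\longrightarrow\; \int_0^s \bigl[f(z,a) + f(z,b)\bigr]\, dz
\]
by dominated convergence, using the continuity of $f$ on the rays $\theta=a, b$ granted by $f\in\Lip_{loc}(\mathrm{int}(C([0,l])))$.

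For the lower bound, take any sequence $(g_k)\subset\Lip_{loc}(B_s(p))$ with $g_k\to\chi_Y$ in $\sL^1_{loc}(\m)$ realizing the infimum, truncated to $g_k\in[0,1]$. Using the Riemannian formula $|\nabla g_k|^2=(\partial_z g_k)^2+z^{-2}(\partial_\theta g_k)^2$ and Rademacher's theorem, one has $\lip(g_k)(z,\theta)\geq z^{-1}|\partial_\theta g_k(z,\theta)|$ almost everywhere on $B_s(p)\setminus\{p\}$, so
\[
\int_{B_s(p)} \lip(g_k)\, d\m \;\geq\; \int_0^s\!\!\int_0^l |\partial_\theta g_k(z,\theta)|\, f(z,\theta)\, d\theta\, dz.
\]
By Fubini and a subsequence extraction, $g_k(z,\cdot)\to \chi_{[a,b]}$ in $\sL^1([0,l])$ for a.e.\ $z\in(0,s)$; for each such $z$, the continuity of the weight $f(z,\cdot)$ at $\theta=a,b$ combined with the standard duality characterization of weighted one-dimensional total variation yields
\[
\liminf_{k\to\infty}\int_0^l |\partial_\theta g_k(z,\theta)|\, f(z,\theta)\, d\theta \;\geq\; f(z,a)+f(z,b),
\]
and Fatou's lemma in $z$ then closes the argument. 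The principal difficulty lies in this one-dimensional weighted lower semicontinuity on radial slices; it depends essentially on the continuity of $f(z,\cdot)$ at the jump locations $\theta=a,b$, which is exactly the content of the hypothesis $f\in\Lip_{loc}(\mathrm{int}(C([0,l])))$ applied to the rays appearing in the formula.
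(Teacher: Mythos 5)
Your overall route (explicit polar coordinates, an angular cutoff for the upper bound, radial slicing plus one-dimensional lower semicontinuity for the lower bound) is different from the paper's, but it has a genuine gap at the tip $p$, which is precisely the point the paper's proof spends its first paragraph handling. First, your competitor $f_k(z,\theta)=\eta_k(\theta)$ is not admissible: it is independent of $z$, so in every ball $B_\varepsilon(p)$ it takes both values $0$ and $1$ at pairs of points whose distance is $O(z)$ with $z\to 0$; hence $f_k$ is not locally Lipschitz at $p$, and $p\in B_s(p)$, so $f_k\notin\Lip_{loc}(B_s(p))$. Second, even granting the competitor, your dominated convergence step needs a dominating function for $k\,f\,\mathbf{1}_{\{a-1/k<\theta<a\}}$ down to $z=0$, and the hypothesis $f\in\Lip_{loc}\big(\mathrm{int}(C([0,l]))\big)$ gives no control of $f$ near the tip (the tip is not an interior point); a density with $f(z,\theta)\sim g(\theta)/z$ near $p$ would be locally integrable against $\aH^2$ yet make your integral diverge. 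This is exactly why the paper first proves that $f$ is bounded in a neighbourhood of $p$, using that $f_r^{1/(n-1)}$ is concave along needles (a consequence of the $\RCD$ hypothesis via \cite{MondinoInventiones} and Lemma \ref{L7}); that boundedness is what legitimises both the choice of competitor (the paper uses $\phi(n\,\sd_a)$, built from the globally $1$-Lipschitz signed distance to $C(\{a\})$) and the passage to the limit. Your proof never invokes the $\RCD$ structure beyond the bare statement that $f\in\Lip_{loc}$ of the interior, and the lemma is false at that level of generality for the upper bound.

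For the lower bound, your slicing argument is a legitimate alternative to the paper's localisation-and-contradiction argument (which compares $P_\m$ on a thin annulus with the constant-weight perimeter $\bar f P_{\lambda^2}$), and the direction of Fatou is correct. But note that passing from convergence of $g_k(z,\cdot)$ in $\sL^1(f(z,\cdot)\,d\theta)$ (which is what Fubini gives you from $\sL^1_{loc}(\m)$ convergence) to the unweighted $\sL^1$ convergence you use in the duality step requires $f(z,\cdot)$ to be bounded below by a positive constant near $\theta=a,b$; positivity of $f$ in the interior is again a consequence of the needle concavity, not of $f\in\Lip_{loc}$ alone, so this should be stated and justified. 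With the boundedness of $f$ near $p$ and its positivity in the interior established as in the paper, both halves of your argument can be repaired.
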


\begin{proof}
    We prove the case $a\neq 0, b= l$, the other cases can be done analogously.
    
    Given a point $q\in C(\{a\})\setminus\{p\}$, $f$ is Lipschitz and thus bounded in a neighborhood $B$ of $q$ in $C([0,l])$. Reasoning as in the proof of the previous lemma, for $\aH^{n-1}$-a.e.\ line $l$ parallel to $C(\{a\})$, the restricted function $f_l$ is such that $f_l^{1/(n-1)}$ is concave. By Lemma \ref{L7}, it follows that $f_l^{1/(n-1)}$ is concave \emph{for every} line $l$ parallel to $C(\{a\})$. Since $f$ is bounded in $B$, we conclude that $f$ is also bounded in a neighbourhood of $p$.
    
    Now, call $\sd_a$ the signed distance from $C(\{a\})$ and let $\phi:\bb R\to[0,1]$ be defined as
    \begin{equation}
        \phi(t)= \begin{cases}
            0 &\text{if }t\leq 0,\\
            t &\text{if }t\in [0,1],\\
            1 &\text{if }t\geq 1.\\
        \end{cases}
    \end{equation}
    Then, for every $n\in \bb N$, we consider $u_n\in \Lip_{loc}\big(C([0,l])\big)$ defined as $u_n(x)=\phi(n\sd_a(x))$. Considering $\pi(x):= \sd(p, \tilde \pi(x))$ where $\tilde \pi$ is the closest point projection on $C(\{a\})$ observe that 
    \begin{multline}
        \liminf_{n\to\infty} \int_{B_s(p)} |\nabla u_n| \, d \m = \liminf_{n\to\infty} \int_0^s \int_{\pi^{-1}(s)} n \cdot f 1_{B_s(p)\cap \{0\leq\sd_a\leq1/n\}} \, d\lambda^1 \, d\lambda^1= \int_0^s f_{|C(\{a\})}(z)\, d z,
    \end{multline}
    where the last step uses the dominated convergence theorem, thanks to $f$ being locally bounded around $p$. We deduce that $P_{\m}(Y, B_s(p)) \leq\int_0^s f_{|C(\{a\})}(z) \, dz$.

    Assume by contradiction that $P_{\m}(Y, B_s(p)) <\int_0^s f_{|C(\{a\})}(z) \, dz$, then there exist $s_1,s_2\in (0, s)$ with $s_1<s_2$ such that 
    \begin{equation}
        P_{\m}(Y, B_{s_2}(p)\setminus B_{s_1}(p)) <\int_{s_1}^{s_2} f_{|C(\{a\})}(z) \, dz. 
    \end{equation}
    Now, for every $\delta>0$, we can find a subinterval $I^\delta= [s^\delta_1,s^\delta_2]\subset [s_1,s_2]$ with $|I^\delta|<\delta$ such that 
    \begin{equation}\label{eq:Idelta}
           \int_{s^\delta_1}^{s^\delta_2} f_{|C(\{a\})}(z) \, dz - P_{\m}(Y, B_{s^\delta_2}(p)\setminus B_{s^\delta_1}(p)) > c |I^\delta|,
    \end{equation}
    for a positive constant $c>0$. This can be proved by taking finer and finer partitions of $[s_1,s_2]$ and selecting suitable subintervals.
    By uniform continuity of $f_{|C(\{a\})}$ on $[s_1,s_2]$, we take $\delta$ such that 
    \begin{equation}
         \int_J f_{|C(\{a\})}(z) \, dz - |J| \cdot \inf_{J} f_{|C(\{a\})} < \frac c2 |J|,
    \end{equation}
    on any interval $J\subset [s_1,s_2]$ with $|J|<\delta$. In particular, for the interval $I^\delta$ satisfying \eqref{eq:Idelta}, we obtain 
    \begin{equation}
        |I^\delta| \cdot \inf_{I^\delta} f_{|C(\{a\})} - P_{\m}(Y, B_{s^\delta_2}(p)\setminus B_{s^\delta_1}(p)) > \frac c2 |I^\delta|>0.
    \end{equation}
    We can then find an open neighborhood $A$ of $C(\{a\}) \cap (B_{s^\delta_2}(p)\setminus B_{s^\delta_1}(p))$ such that $P_{\m}(Y, A) < (s^\delta_2-s^\delta_1)\inf_A f$. 
    However, calling $\bar f=\inf_A f$, we have $P_{\m}(Y, A) \geq  P_{\bar f \lambda^2}(Y, A)= \bar f P_{\lambda^2}(Y, A) \geq \bar f (s^\delta_2-s^\delta_1)$, a contradiction.
\end{proof}

\section{Main result}

The next result shows that perimeter minimizing sets in manifolds with non-negative Ricci curvature, and sufficiently slow volume growth at infinity, are regular. This is an adaptation of \cite[Theorem 2.1]{Anderson}.
\begin{thm} \label{T1}
    Let $(M^n,g,p)$ be a pointed Riemannian manifold with $\Ric_M \geq 0$ and such that
    \begin{equation}\label{eq:weakassump}
        \int_1^\infty \frac{t^2}{\ssf{Vol}(B_t(p))} \, dt =+\infty.
    \end{equation}
    If $E \subset M$ is perimeter minimizing, then $E$ is smooth and its boundary is totally geodesic.
    \begin{remark}
       Theorem \ref{T1} proves that the area minimizing boundary $\partial E$ is totally geodesic. Similar statements for \emph{stable} minimal hypersurfaces are obtained in \cite{SchoenYau, FischerSchoen,CurvYau,chodosh2024completestableminimalhypersurfaces}. The main difference is that the stronger assumption that the minimal hyperurface is an area minimizing boundary allows to use the estimates from Theorem \ref{T7}. This is the reason why Theorem \ref{T1} has a simpler proof than the forementioned results. 
    \end{remark}
    \begin{proof}
        Let $\Sigma \subset \partial E$ be the singular set of $\partial E$. By the classical regularity theory for perimeter minimizers, $\Sigma$ is a closed set with $\aH^{n-7}(\Sigma)=0$. Moreover, by the stability inequality (see \cite{ColdingMinicozzi}), for every $\phi \in C^\infty_c(\partial E \setminus \Sigma)$, it holds
        \begin{equation} \label{E1}
        \int_{\partial E} \phi^2(|\Pi_{\partial E}|^2+\Ric(\nu,\nu)) \, d\aH^{n-1} \leq \int_{\partial E } |\nabla_{\partial E} \phi|^2 \, d\aH^{n-1},
        \end{equation}
        where $\nu$ is the normal to $\partial E$ and $\Pi_{\partial E}$ is the second fundamental form of $\partial E$ (both are only defined in the smooth points).
        By approximation, inequality \eqref{E1} holds for any function $\phi \in \Lip_c(\partial E \setminus \Sigma)$. We now divide the remaining part of the proof in three different steps. \medskip

        \textbf{Step 1}: Inequality \eqref{E1} holds for any function $\phi \in \Lip_c(M)$. \medskip

        \noindent To prove this, fix $\phi \in \Lip_c(M)$. It is sufficient to find a sequence of functions $\eta_i \in \Lip_c(M)$ taking values in $[0,1]$ such that $\eta_i \equiv 1$ on a neighbourhood of $\supp(\phi) \cap \Sigma$ and $\int_{\partial E } |\eta_i|^2 + |\nabla_{\partial E} \eta_i|^2 d\aH^{n-1}  \to 0$ as $i\to \infty$. Indeed, given such a sequence, it holds
        \begin{multline} 
        \int_{\partial E}  ((1-\eta_i)\phi)^2(|\Pi_{\partial E}|^2+\Ric(\nu,\nu)) \, d\aH^{n-1} 
        \leq \int_{\partial E } |\nabla_{\partial E} (1-\eta_i)\phi|^2 \, d\aH^{n-1} \\
         \leq 
        \int_{\partial E } (1-\eta_i)^2|\nabla_{\partial E} \phi|^2 \, d\aH^{n-1}+c_1(\phi)\int_{\partial E } |\eta_i|^2 + |\nabla_{\partial E} \eta_i|^2 d\aH^{n-1}.
\end{multline}        
Passing to the limit as $i\to \infty$ the last inequality, we would conclude the proof of Step 1.

        We now construct the functions $\eta_i \in \Lip_c(M)$ with the desired properties repeating an argument of \cite[Between equations (2.6) and (2.7)]{Anderson} (after \cite{SingActa}). 
        Let $M$ be isometrically embedded in a large Euclidean space $\bb{R}^L$. Let $\varepsilon>0$. Since $\aH^{n-7}(\Sigma)=0$, there exists a finite collection $\{Q_k\}_k$ of cubes in $\bb{R}^L$ with sides $s_k \leq \varepsilon$ such that
        \[
        \supp(\phi) \cap \Sigma \subset \bigcup_k Q_k, \quad \text{and} \quad
        \sum_k s_k^{n-7} \leq \varepsilon.
        \]
        By relabeling, we can suppose that $s_1 \geq s_2 \geq \cdots$. By \cite[Lemmas 3.1 and 3.2]{SingActa}, there exists a function $\eta_\varepsilon \in C^\infty_c(\bb{R}^L)$ taking values in $[0,1]$ such that $\eta_\varepsilon \equiv 1$ on $\cup_k Q_k$, $\supp(\eta_\varepsilon) \subset \cup_k (3/2)Q_k$, and
        \[
        |\nabla_{\bb{R}^L} \eta_\varepsilon| \leq c s_k^{-1} \quad \text{on } T_k:=(3/2)Q_k \setminus \cup_{j>k}(3/2)Q_j,
        \]
        for a constant $c>0$ depending only on $L$.
        Observe that if $\varepsilon>0$ is small enough, since $M$ is embedded isometrically in $\bb{R}^L$, it holds $B^M_{3 \sqrt{L} s_k}(x) \supset ((3/2)Q_k) \cap M$ for any $x \in Q_k \cap M \cap \supp(\phi)$. We can suppose that each cube $Q_k$ intersects $\Sigma \cap \supp(\phi)$, so that using Theorem \ref{T7}, it holds
        \[
        \aH^{n-1}(\partial E \cap (3/2)Q_k) \leq \aH^{n-1}(\partial E \cap B^M_{3 \sqrt{L}s_k}(x)) \leq c(n,L)s_k^{n-1}.
        \]
        Hence, using that $|\nabla_{\partial E}| \leq |\nabla_{\bb{R}^L}|$, for a constant $c'$ depending on $L$ and $n$, it holds
        \[
        \int_{\partial E} |\nabla_{\partial E} \eta_\varepsilon|^2 \, d\aH^{n-1} \leq c'\sum_k \aH^{n-1}(T_k \cap \partial E)s_k^{-2} \leq c' \sum_k s_k^{n-3} \leq c'\varepsilon.
        \]
        Similarly,
        \[
        \int_{\partial E} | \eta_\varepsilon|^2 \, d\aH^{n-1} \leq \sum_k \aH^{n-1}(T_k \cap \partial E) \leq c'\sum_k s_k^{n-1} \leq c'\varepsilon.
        \]
        Considering functions $\eta_{\varepsilon_i}$ for a sequence $\varepsilon_i \downarrow 0$, we conclude the proof of Step 1.
        
        \medskip
        \textbf{Step 2}: $\Pi_{\partial E} \equiv 0$ on the set $\partial E \setminus \Sigma$.\medskip
        
        \noindent Let $x \in \partial E$, $R>0$, and consider the function $\phi_R \in \Lip_c(M)$ defined by
        \[
        \phi_R(y):=\frac{\int_{1 \vee \sd(x,y) \wedge R}^R \frac{s}{P(E,\bar{B}_s(x))} \, ds}{\int_1^R\frac{s}{P(E,\bar{B}_s(x))} \, ds}.
        \]
        To shorten the notation, we set $C_R:=\int_1^R\frac{s}{P(E,\bar{B}_s(x))} \, ds$.
        It holds that
        \begin{align} \label{E5}
            \int_{\partial E} & |\nabla_{\partial E} \phi_R|^2 \, d\aH^{n-1} 
            \leq
            \int_{\partial E} |\nabla \phi_R|^2 \, d\aH^{n-1}
           C_R^{-2}\int_{(B_R(x) \setminus B_1(x)) \cap \partial E} \frac{\sd(x,y)^2}{P(E,\bar{B}_{\sd(x,y)}(x))^2} \, d\aH^{n-1}(y).
        \end{align}
        We define $h :[1,R] \to \bb{R}$ as $h(s):=P(E,\bar{B}_{s}(x))$. Observe that $h$ has bounded variation, since it is monotone. We also consider the measure $\nu$ on the Borel sets of $[1,R]$ defined as $\nu([a,b]):=P(E,\bar{B}_b(x) \setminus B_a(x))$ for every $1 \leq a \leq b \leq R$. The measure
        $\nu$ is the distributional derivative of $h$ in $[1,R]$. Moreover, we have that
        \[
        \nu=\sd(x,\cdot)_\#\big[\aH^{n-1} \mres \big(\partial E \cap (B_R(x) \setminus B_1(x))\big)\big],
        \]
        and therefore
        \begin{align} \label{E6}
            \int_{\partial E} &|\nabla_{\partial E} \phi_R|^2 \, d\aH^{n-1} 
            \leq   C_R^{-2}\int_1^R \frac{s^2}{h(s)^2} \, d\nu(s).
        \end{align}
        For a function of bounded variation $f \in \BV(\bb{R})$, we denote by $D^j f$ the jump part of its derivative, by $\tilde{D} f$ the remaining part of the derivative and by $J_f$ the jump set. Using the chain rule (see \cite[Theorem $3.96$]{AmbFuscPall}), it holds
        \begin{align*}
        D \Big(\frac{s^2}{h(s)} \Big)
        &=
        D^j \Big(\frac{s^2}{h(s)} \Big)+\tilde{D} \Big(\frac{s^2}{h(s)} \Big) \\
        &=s^2\Big( \frac{1}{h^+(s)}-\frac{1}{h^-(s)} \Big) \aH^0 \mres J_h+
        2\frac{s}{h(s)} d\lambda^1-\frac{s^2}{h(s)^2} \tilde{D}h \\
        & = s^2\Big( \frac{1}{h^+(s)}-\frac{1}{h^-(s)} \Big) \aH^0 \mres J_h+
        2\frac{s}{h(s)} d\lambda^1-\frac{s^2}{h(s)^2} \nu+\frac{s^2}{h(s)^2} D^jh.
        \end{align*}
        By definition of $h$, on a jump point of $h$, it holds $h(s)=h^+(s)$, so that 
        \begin{multline}
         s^2\Big( \frac{1}{h^+(s)}-\frac{1}{h^-(s)} \Big) \aH^0 \mres J_h+\frac{s^2}{h(s)^2} D^jh=s^2(h^+(s)-h^-(s))\Big(\frac{1}{h^+(s)^2}-\frac{1}{h^+(s)h^-(s)} \Big) \aH^0 \mres J_h \leq 0.
        \end{multline}
        Combining with the previous chain of inequalities, we obtain
        \[
        \frac{s^2}{h(s)^2} \nu \leq 2\frac{s}{h(s)} d\lambda^1-D \Big(\frac{s^2}{h(s)} \Big).
        \]
        In particular, we deduce that
        \[
        C_R^{-2}  \int_{1}^R \frac{s^2}{h(s)^2} \, d \nu (s)=2 C_R^{-1}-C_R^{-2}\Big( \frac{R^2}{h(R)}-\frac{1}{h(1)}\Big) \leq 
        2 C_R^{-1}+C_R^{-2}\frac{1}{h(1)}
        .
        \]
        By the hypothesis \eqref{eq:weakassump} combined with the perimeter estimate of Theorem \ref{T7}, we observe that $C_R \to +\infty$ as $R \to + \infty$. Combining the last inequality with \eqref{E6} and Step $1$, we deduce that $\Pi_{\partial E} \equiv 0$ in $B_1(x) \cap (\partial E \setminus \Sigma)$. By a scaling argument, it holds $\Pi_{\partial E} \equiv 0$ in $\partial E \setminus \Sigma$.\medskip

        \textbf{Step 3}: $\Sigma=\emptyset$.\medskip

        \noindent Assume by contradiction that this is not the case. We use Federer's dimension reduction argument to find a contradiction. Let $p \in \Sigma$. Taking the blow-up of $E$ in $p$, we obtain a perimeter minimizing set $E_1 \subset \bb{R}^n$. Since $E$ is singular in $p$, the origin $0 \in \bb{R}^n$ belongs to the singular set $\Sigma_1$ of $\partial E_1$.
        
        We claim that, since $\partial E$ is totally geodesic outside of its singular set, the same holds for $\partial E_1$. 
        To see this, let $M$ be isometrically embedded into a large Euclidean space $\bb{R}^L$. As we take the blow-up of $M$ at $p$, the second fundamental form of $M$ w.r.t. $\bb{R}^L$, in the Euclidean unit ball around $p$, converges uniformly to zero. Hence, in the same Euclidean ball, also the second fundamental form of $\partial E$ w.r.t. $\bb{R}^L$ converges uniformly to zero. By \cite[Theorem 5.3.2]{HutchinsonVarifolds} (see also \cite{MondinoVarifolds}), it follows that $\partial E_1 \setminus \Sigma_1$ is totally geodesic in $\bb{R}^L$, so that it is also totally geodesic in the copy of $\bb{R}^n$ inside $\bb{R}^L$ which corresponds to the tangent space to $M$ in $p$, proving the claim.
        Moreover,  taking another blow-up in the origin, we can additionally assume that $E_1$ is a cone. 

        If $\Sigma_1=\{0\}$, then $\partial E_1$ is totally geodesic outside of the origin. As a consequence, $\partial E_1$ is a hyperplane, which contradicts the fact that $0 \in \Sigma_1$. Hence, we can suppose that there exists $p \neq 0$ such that $p \in \Sigma_1$. Taking a blow-up of $E_1$ in $p$, and using that $E_1$ is a cone, we obtain a perimeter minimizing set $\tilde{E}_2 \subset \bb{R}^n$ of the form $\tilde{E}_2 = E_2 \times \bb{R} \subset \bb{R}^{n-1} \times \bb{R}$. Moreover, $\partial \tilde{E}_2$ is totally geodesic outside of its singular set, and $0$ belongs to the singular set $\tilde{\Sigma}_2$ of $\partial \tilde{E}_2$. Hence, $E_2 \subset \bb{R}^{n-1}$ is perimeter minimizing, $\partial E_2$ is totally geodesic outside of its singular set, and $0$ belongs to the singular set $\Sigma_2$ of $\partial E_2$. Taking another blow-up in the origin we can additionally assume that $E_2$ is a cone. As before, if $\Sigma_2 = \{0\}$, we obtain a contradiction. Otherwise, we keep repeating this procedure, until we obtain $E_k \subset \bb{R}^{n-k+1}$ as before and such that $\Sigma_k=\{0\}$. This happens for some $k \geq n-7$ by the standard regularity theory of perimeter minimizers. 
    \end{proof}
\end{thm}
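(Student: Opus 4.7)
My approach is an Anderson-type stability argument adapted to perimeter minimizers. By the standard regularity theory for minimizing currents, $\partial E$ is smooth outside a closed singular set $\Sigma$ with $\aH^{n-7}(\Sigma) = 0$, and on the regular part the stability inequality
\[
\int_{\partial E} \phi^2\big(|\Pi_{\partial E}|^2 + \Ric(\nu,\nu)\big)\, d\aH^{n-1} \leq \int_{\partial E} |\nabla_{\partial E}\phi|^2\, d\aH^{n-1}
\]
holds for $\phi \in C^\infty_c(\partial E \setminus \Sigma)$. Since $\Ric \geq 0$ I discard the Ricci term and look for a sequence of nonnegative compactly supported test functions that converge to $1$ on arbitrary balls while having Dirichlet energy on $\partial E$ tending to $0$; this will force $|\Pi_{\partial E}| \equiv 0$ on $\partial E \setminus \Sigma$.

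The natural test function is the radial cutoff
\[
\phi_R(y) := \frac{1}{C_R} \int_{1 \vee \sd(x,y) \wedge R}^{R} \frac{s}{h(s)}\, ds, \quad h(s) := P(E,\bar B_s(x)),\ C_R := \int_1^R \frac{s}{h(s)}\, ds,
\]
centered at a fixed $x \in \partial E$. The coarea-type identity identifying $\sd(x,\cdot)_{\#}\big(\aH^{n-1}\mres (\partial E \cap (B_R\setminus B_1))\big)$ with the distributional derivative $\nu$ of $h$ on $[1,R]$ converts $\int_{\partial E}|\nabla_{\partial E} \phi_R|^2$ into $C_R^{-2}\int_1^R s^2/h(s)^2\, d\nu$. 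A BV chain rule for $s \mapsto s^2/h(s)$, exploiting that the jump part of $\nu$ (with $h$ nondecreasing and $h = h^+$ at jump points) contributes to $D(s^2/h)$ with the favorable sign, bounds this in turn by $2C_R^{-1} + C_R^{-2}/h(1)$. The upper perimeter estimate $h(s) \leq C\,\ssf{Vol}(B_s(x))/s$ from Theorem \ref{T7}, combined with the slow-growth hypothesis \eqref{eq:weakassump}, forces $C_R \to \infty$, so the Dirichlet energy vanishes in the limit, and a subsequent scaling argument gives $\Pi_{\partial E} \equiv 0$ on all of $\partial E \setminus \Sigma$.

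Two further steps complete the proof. First, $\phi_R \in \Lip_c(M)$ need not vanish near $\Sigma$, so I must extend the stability inequality from test functions in $C^\infty_c(\partial E \setminus \Sigma)$ to those in $\Lip_c(M)$. I would do this by building Whitney-type cutoffs $\eta_\varepsilon$ equal to $1$ on a neighbourhood of $\Sigma \cap \supp(\phi_R)$ out of finitely many cubes of small total $(n-7)$-content, and using the Ahlfors-type bound $\aH^{n-1}(\partial E \cap Q) \leq C(\mathrm{diam}\,Q)^{n-1}$ from Theorem \ref{T7} to ensure both $\int_{\partial E}|\eta_\varepsilon|^2 \to 0$ and $\int_{\partial E}|\nabla_{\partial E}\eta_\varepsilon|^2 \to 0$; applying stability to $(1-\eta_\varepsilon)\phi_R$ and letting $\varepsilon \downarrow 0$ then $R \uparrow \infty$ achieves the desired conclusion. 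Second, to show $\Sigma = \emptyset$ I run Federer's dimension reduction: an iterated blow-up at a singular point produces a perimeter-minimizing cone in Euclidean space whose regular part remains totally geodesic (the property passes to blow-up limits via classical varifold convergence results, as the Euclidean second fundamental form of $M$ vanishes in the blow-up); if the singular set of the cone is just the origin then the cone is a hyperplane, contradicting singularity there, while otherwise a blow-up at a non-origin singularity splits off a line and lowers the dimension, with the procedure terminating in at most $n-7$ steps. The main obstacle I foresee is the extension step above: controlling the cutoffs $\eta_\varepsilon$ relies precisely on the Ahlfors-regular upper perimeter bound of Theorem \ref{T7}, which is exactly what is available for area-minimizing boundaries but not for general stable hypersurfaces, and so marks the decisive point where the minimality hypothesis enters the argument.
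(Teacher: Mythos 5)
Your proposal is correct and follows essentially the same route as the paper: the same radial test function $\phi_R$ with the BV chain rule for $s\mapsto s^2/h(s)$, the same Whitney-type cutoff construction near $\Sigma$ controlled by the Ahlfors upper bound of Theorem \ref{T7}, and the same Federer dimension reduction to rule out singularities. You also correctly identify the role of the minimality hypothesis (via Theorem \ref{T7}) as the point where the argument departs from the case of merely stable hypersurfaces, exactly as the paper's remark emphasizes.
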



\begin{remark}\label{R1}
We say that a closed set $A \subset M^n$ is smooth if, for every $x \in A$, there exists a chart $(U,\phi)$ of $M$ such that $\phi(A \cap U) \subset \bb{R}^n$ is either the whole $\bb{R}^n$ or a half space $\bb{R}^{n-1} \times \bb{R}_+$.
    We recall that if $E \subset M$ is a perimeter minimizing set whose (essential) boundary is smooth, then its closed representative is a smooth set.
\end{remark}

We now prove two simple lemmas involving the volume growth condition \eqref{eq:weakassump}.

\begin{lemma} \label{Lemma:connectedness}
    Let $(M^n,g,p)$ be a pointed Riemannian manifold with $\Sec_M \geq 0$ and such that
    \[
    \int_1^\infty \frac{t^2}{\ssf{Vol}(B_t(p))} \, dt =+\infty.
    \]
    If $E \subset M$ is perimeter minimizing, then $\partial E$ is connected. Moreover, the closed representative of $E$ is a connected smooth set.
    \begin{proof}
        Assume by contradiction that $\partial E$ is disconnected. Modulo replacing $E$ with its complement, there exists a connected component $A \subset E$ such that $\partial A$ has more than one connected component. By the previous theorem and Remark \ref{R1}, $A$ is a smooth set. By \cite[Theorem 5.2]{Burago_1977}, $A \cong N^{n-1} \times [0,l]$ with its intrinsic metric, for some manifold $N^{n-1}$ with non-negative sectional curvature. If $N$ is compact, then $E \setminus A$ is a competitor of $E$, contradicting that $E$ is perimeter minimizing. Hence, $N$ is non-compact. 
        
        Let $p \in N$. We claim that there exists $s>0$ such that
    \begin{equation} \label{E7}
    l P(B^N_s(p),N) < 2 \aH^{n-1}(B^N_s(p)).
    \end{equation}
    Recall that, by the coarea formula, the function $ s \mapsto \aH^{n-1}(B^N_s(p))$ is absolutely continuous and satisfies $\frac{d}{ds} \aH^{n-1}(B^N_s(p)) = P(B^N_s(p),N)$ for a.e. $s>0$.
    Hence, if \eqref{E7} fails for every $s>0$, it follows that $\frac{d}{ds} \aH^{n-1}(B^N_s(p)) \geq 2l^{-1} \aH^{n-1}(B^N_s(p))$ and thus $\aH^{n-1}(B^N_s(p))$ grows exponenentially in $s$. This contradicts the Bishop-Gromov inequality and proves the claimed inequality \eqref{E7}.
    
    So let $s_0>0$ be a value satisfying \eqref{E7}.
    Consider the set
    \[
    B:= E \setminus (B^N_{s_0}(p) \times [0,l]).
    \]
    Let $U \subset M$ be an open set containing the subset of $A \subset E$ identified with $\bar{B}^N_{s_0+1}(p) \times [0,l]$.
    Observe that $B \Delta E \ssubset U$. Moreover, it holds that
    \begin{align*}
    P (B,U)
    &=l P(B^N_{s_0}(p),N)+P(E,U \setminus \bar{B}^N_{s_0}(p) \times [0,l])
    \\
    &< 2 \aH^{n-1}(B^N_{s_0}(p))
    +P(E,U \setminus \bar{B}^N_{s_0}(p) \times [0,l])
    = P(E,U),
    \end{align*}
    contradicting the fact that $E$ is a perimeter minimizer. Hence, $\partial E$ has only one connected component. 
    
    In conclusion, the closed representative of $E$ is a smooth set with connected boundary. Hence, it is connected.
    \end{proof}
\end{lemma}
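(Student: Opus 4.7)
The plan is to argue by contradiction: assume $\partial E$ is disconnected and construct a competitor with strictly smaller perimeter. The regularity input is already in place via Theorem \ref{T1} and Remark \ref{R1}, which guarantee that the closed representative of $E$ is a smooth set with totally geodesic boundary. Possibly replacing $E$ with its complement, I fix a connected component $A$ of this representative whose boundary $\partial A$ has at least two connected components.

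The first key step is to rigidify $A$. Since $A$ is a complete manifold with $\Sec \geq 0$ whose boundary is a union of totally geodesic hypersurfaces with more than one component, I plan to invoke \cite[Theorem 5.2]{Burago_1977} to obtain an isometric identification $A \cong N^{n-1} \times [0,l]$ with $N$ a complete manifold of non-negative sectional curvature. If $N$ is compact then $A$ has finite measure and $E \setminus A$ is a strictly better competitor, immediately contradicting minimality.

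When $N$ is non-compact, my plan is to excise a cylindrical plug. For $p \in N$ and $s>0$, compare $E$ with
\[
B := E \setminus \bigl(B^N_s(p) \times [0,l]\bigr).
\]
Localizing on a bounded open set $U$ containing the plug, the perimeter difference is
\[
P(B,U) - P(E,U) = l \cdot P(B^N_s(p), N) - 2\aH^{n-1}(B^N_s(p)),
\]
because the two cap contributions $B^N_s(p) \times \{0,l\}$ are traded for the lateral cylinder $\partial B^N_s(p) \times [0,l]$. So the problem reduces to producing some $s>0$ with $l \cdot P(B^N_s(p),N) < 2\aH^{n-1}(B^N_s(p))$.

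The main obstacle is securing this last inequality, and I expect the following volume-growth argument to settle it. If the inequality failed for every $s>0$, then by the coarea formula $\tfrac{d}{ds}\aH^{n-1}(B^N_s(p)) = P(B^N_s(p),N)$ for a.e.\ $s$, so $V(s) := \aH^{n-1}(B^N_s(p))$ would satisfy $V'(s) \geq \tfrac{2}{l} V(s)$ a.e., yielding exponential volume growth of geodesic balls in $N$. Since $N$ has non-negative sectional curvature, this contradicts the Bishop--Gromov inequality. Granting the strict inequality, $B$ violates the minimality of $E$, so $\partial E$ is connected. The second assertion is immediate: the closed representative is smooth by Theorem \ref{T1} and Remark \ref{R1}, and a smooth set with connected boundary is itself connected.
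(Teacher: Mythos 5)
Your proposal is correct and follows essentially the same route as the paper: the same reduction to a component $A$ with disconnected boundary, the same appeal to \cite[Theorem 5.2]{Burago_1977} to write $A \cong N \times [0,l]$, the same cylindrical-plug competitor, and the same coarea/Bishop--Gromov argument to find a radius where the lateral area beats the two caps. No gaps.
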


\begin{lemma}\label{lem:magia}
    Let $(M^n,g)$ be a Riemannian manifold such that $\liminf_{r \to + \infty} \frac{\ssf{Vol}(B_r(p))}{r^2} < + \infty$, then 
    \begin{equation}\label{eq:magia}
        \int_1^\infty \frac{t^2}{\ssf{Vol}(B_t(p))} \, dt =+\infty.
    \end{equation}
\end{lemma}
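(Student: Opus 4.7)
Write $V(t) := \ssf{Vol}(B_t(p))$ and note that $V$ is non-decreasing in $t$. The hypothesis $\liminf_{r\to+\infty} V(r)/r^2 < +\infty$ gives a constant $C > 0$ and a sequence $r_i \uparrow +\infty$ with $V(r_i) \leq C r_i^2$ for every $i$. The plan is to use the monotonicity of $V$ to turn this sparse pointwise bound into a lower bound for $\tfrac{t^2}{V(t)}$ on an interval whose length grows with $r_i$, and then conclude by monotonicity of the integral $\int_1^{r_i} \tfrac{t^2}{V(t)}\,dt$.

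Concretely, for any $t \in [r_i/2, r_i]$, monotonicity of $V$ gives
\[
V(t) \;\leq\; V(r_i) \;\leq\; C r_i^2 \;\leq\; 4C t^2,
\]
hence $\tfrac{t^2}{V(t)} \geq \tfrac{1}{4C}$ on that interval. Integrating,
\[
\int_{r_i/2}^{r_i} \frac{t^2}{V(t)}\,dt \;\geq\; \frac{1}{4C}\cdot \frac{r_i}{2} \;=\; \frac{r_i}{8C}.
\]
Since the integrand $t^2/V(t)$ is non-negative and $r_i \to +\infty$, it follows that
\[
\int_1^{r_i} \frac{t^2}{V(t)}\,dt \;\geq\; \int_{r_i/2}^{r_i} \frac{t^2}{V(t)}\,dt \;\to\; +\infty,
\]
so $\int_1^\infty \tfrac{t^2}{V(t)}\,dt = +\infty$, which is \eqref{eq:magia}.

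There is no real obstacle here: the only idea needed is that quadratic upper bounds on $V$ at a single scale $r_i$ automatically propagate downward to all smaller scales by monotonicity, and on a window $[r_i/2, r_i]$ of length comparable to $r_i$ this turns into a uniform lower bound on the integrand. The sequence of scales does not even need to be separated, since one divergent window suffices.
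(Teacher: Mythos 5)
Your proof is correct and follows essentially the same idea as the paper's: propagate the quadratic bound $V(r_i)\leq Cr_i^2$ down to nearby smaller radii via the monotonicity of $t\mapsto \ssf{Vol}(B_t(p))$, obtaining a uniform lower bound on the integrand there. The only (harmless, and slightly tidier) difference is that you use the long window $[r_i/2,r_i]$, so a single window already makes the integral diverge, whereas the paper uses unit-length windows $[t_i-1,t_i]$ and sums over the whole sequence.
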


\begin{proof}
    By assumption, we can find $t_i\uparrow \infty$ such that 
    \begin{equation}
        \frac{\ssf{Vol}(B_{t_i}(p))}{t_i^2} < C, \qquad \text{for every }i\in \mathbb N,
    \end{equation}
    where $C>0$ is a fixed constant. Then, for every $i\in \mathbb N$ and every $s\in [0,1]$, we have
    \begin{equation}
        \ssf{Vol}(B_{t_i-s}(p)) \leq \ssf{Vol}(B_{t_i}(p)) < C t_i^2 = C (t_i^2-s) \, \frac{t_i^2}{t_i^2-s} \leq  2C (t_i^2-s),
    \end{equation}
    where the last inequality holds for $i$ sufficiently large. Thus, the integrand in \eqref{eq:magia} is greater than $\frac{1}{2C}$ on $\cup_i \, [t_i-1,t_i]$. The thesis easily follows.
\end{proof}

The next result combines \cite[Theorem 2.4]{BlowDown1} and  \cite[Remark 2.1]{zhou2024optimalvolumeboundvolume} (see also \cite[Proposition A.1]{NavarroLinear}). We refer to Definition \ref{DBlowdown} for the definition of blow-down of a manifold with non-negative Ricci curvature.

\begin{lemma} \label{L3}
    Let $(M^n,g)$ be a non-compact manifold with $\Ric_M \geq 0$ and such that
    \[
    \liminf_{r \to + \infty} \frac{\ssf{Vol}(B_r(p))}{r} < + \infty.
    \]
    Then, we have
    \[
    \limsup_{r \to + \infty} \frac{\ssf{Vol}(B_r(p))}{r} < + \infty,
    \]
    and the blow-down of $M$ is unique and it is either a line or a half line.
    \end{lemma}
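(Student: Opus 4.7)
The plan is to invoke and combine the cited results. First, I extract a blow-down: taking $r_i\to\infty$ realizing $L := \liminf_{r\to\infty}\ssf{Vol}(B_r(p))/r$, Gromov's precompactness together with Theorem \ref{stability} yields (up to subsequence) a pmGH limit $(X_\infty,\sd_\infty,\m_\infty,p_\infty)$ of the rescaled and normalized spaces $(M,\sd/r_i,\ssf{Vol}(B_{r_i}(p))^{-1}\ssf{Vol},p)$, which is an $\RCD(0,n)$ space with $\m_\infty(B_1(p_\infty))=1$.

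The heart of the matter, which is the content of \cite[Theorem 2.4]{BlowDown1}, is to prove that any such blow-down must be $1$-dimensional. The underlying intuition is that the linear bound $\ssf{Vol}(B_{r_i}(p))\le(L+1)r_i$, combined with Bishop-Gromov monotonicity of $\ssf{Vol}(B_r(p))/r^n$, restricts the way measure can accumulate in the limit and forces the limiting space to concentrate along a $1$-dimensional axis. Applying the classification of $1$-dimensional $\RCD(0,n)$ spaces (which gives $\mathbb{R}$, a half-line, a circle, or a closed bounded interval), together with the observation that $M$ non-compact forces the blow-down to have infinite diameter, one concludes $X_\infty$ is isometric to $\mathbb{R}$ or $[0,\infty)$.

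Uniqueness of the blow-down then follows from \cite[Remark 2.1]{zhou2024optimalvolumeboundvolume} (equivalently \cite[Proposition A.1]{NavarroLinear}): the rigidity of the $1$-dimensional blow-down is sufficient to force different subsequential limits to coincide. Once the blow-down is unique, the ratio $\ssf{Vol}(B_r(p))/r$ admits a single limit as $r\to\infty$, necessarily equal to $L$, and thus $\limsup_{r\to\infty}\ssf{Vol}(B_r(p))/r = L < \infty$.

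The main obstacle is the $1$-dimensionality step: Bishop-Gromov alone does not upgrade a bound on $\liminf \ssf{Vol}(B_r(p))/r$ to a uniform bound on the growth of $\m_\infty$, so one must invoke the finer splitting-type and Busemann-function rigidity arguments from \cite{BlowDown1}. Granted this dimension reduction, the remaining ingredients (classification of $1$D $\RCD$ spaces, ruling out compact options, and propagating uniqueness to the limsup bound) are comparatively routine.
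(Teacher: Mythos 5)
Your proposal takes the same route as the paper, which in fact offers no proof at all: Lemma \ref{L3} is stated as a direct combination of \cite[Theorem 2.4]{BlowDown1} with \cite[Remark 2.1]{zhou2024optimalvolumeboundvolume} (or \cite[Proposition A.1]{NavarroLinear}), and your identification of which external results carry the weight (Sormani-type Busemann rigidity for the one-dimensionality and shape of the blow-down, the classification of one-dimensional limits, non-compactness ruling out the circle and the bounded interval) is accurate. The one step I would not accept as written is the final deduction of $\limsup_{r}\ssf{Vol}(B_r(p))/r<+\infty$ from uniqueness of the blow-down: the blow-down in Definition \ref{DBlowdown} is taken with the \emph{normalized} measures $\ssf{Vol}(B_{r_i}(p))^{-1}\ssf{Vol}$, so the growth rate of the volume is divided out, and uniqueness of the normalized pmGH limit does not by itself force the unnormalized ratio $\ssf{Vol}(B_r(p))/r$ to converge or even to stay bounded. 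The correct logical order is the reverse: the cited remark of Zhou (resp.\ Navarro) establishes the bound $\limsup_r \ssf{Vol}(B_r(p))/r<+\infty$ directly from the $\liminf$ hypothesis via the Busemann-function slab estimates, and this uniform linear bound is then an input to, not a consequence of, the uniqueness of the blow-down. Since that is exactly the content of the reference you already invoke, the fix is only to quote it for the $\limsup$ statement rather than to rederive it from uniqueness.
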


In the next proposition we study the blow-down procedure which is at the core of our strategy.
 
\begin{proposition} \label{P:blowdownDim2}
    Let $(M^n,g)$ be a Riemannian manifold with $\Sec_M \geq 0$ with
    \begin{equation}\label{eq:assumptionVol}
        \liminf_{r \to + \infty} \frac{\ssf{Vol}(B_r(p))}{r^2} < + \infty.
    \end{equation}
    Let $E \subset M$ be a smooth perimeter minimzing set with non-compact boundary.
    Consider the metric space $(X,\sd)$ obtained by gluing $(E,g)$ and $(\partial E,g) \times \bb{R}_+$ along their isometric boundaries. Then, $X$ is an Alexandrov space with non-negative sectional curvature, $E \subset X$ is perimeter minimizing, and the blow-down of $X$ is a cone of Hausdorff dimension $2$.
    \begin{proof}
        By Lemma \ref{lem:magia} and Theorem \ref{T1}, $\partial E \subset M$ is totally geodesic, so that with its intrinsic metric it has non-negative sectional curvature. Hence, since $E$ and $\partial E$ are connected by Lemma \ref{Lemma:connectedness}, $(E,g)$ and $(\partial E,g) \times \bb{R}_+$ are Alexandrov spaces with non-negative sectional curvature and isometric boundaries. By \cite{PetruninGluing}, $(X,\sd)$ is an Alexandrov space with non-negative sectional curvature.

        It is easy to check that $E$ is sub-minimizing and super-minimizing, so that by Lemma \ref{L4} it follows that $E$ is perimeter minimizing.
        We divide the remaining part of the proof in steps.\medskip

        \textbf{Step 1}: The blow-down of $(\partial E,g)$ has Hausdorff dimension $1$.\medskip
        
        \noindent By Theorem \ref{T7}, combined with our assumption \eqref{eq:assumptionVol}, we have that
        \begin{equation} \label{E12}
            \liminf_{r \to + \infty} \frac{P(E, B_r(p))}{r} < + \infty.
       \end{equation}
        Since the distance induced by $g$ on $\partial E$ is larger than the one induced on $M$ restricted to $\partial E$, denoting by $B_r^{\partial E}(p)$ balls in $(\partial E,g)$,
       it holds that
       \[
       \aH^{n-1}(B_r^{\partial E}(p)) \leq \aH^{n-1}(\partial E \cap B^M_r(p))=P(E,B_r(p)).
       \]
       Combining this with \eqref{E12}, it holds
       \begin{equation}
           \liminf_{r \to + \infty} \frac{\aH^{n-1}(B_r^{\partial E}(p))}{r}< + \infty.
       \end{equation}
       Since $(\partial E,g)$ is a manifold of non-negative sectional curvature, by Lemma \ref{L3}, the blow-down of $(\partial E,g)$ is one-dimensional.\medskip

        \textbf{Step 2}: The blow-down of $(X,\sd)$ has Hausdorff dimension $2$. \medskip
        
        Let $r_i \uparrow + \infty$.
        Let $(X_\infty,\sd_\infty,x_\infty)$ be the pGH limit of the sequence $(X,\sd/r_i,x)$, i.e. the blow-down of $(X,\sd)$. 
        We need to show that such limit space has Hausdorff dimension $2$.

        To this aim, let $p \in \partial E$ and let $\bar{p}_i:=(p,10r_i) \in \partial E \times \bb{R}_+ \subset X$. 
        On the balls $B_{r_i}(\bar{p}_i) \subset X$, we consider the distance $\tilde{\sd}$ obtained as the restriction of the distance induced by $g+dt^2$ on $\partial E \times \bb{R}$.
        We claim that $\sd$ and $\tilde{\sd}$ coincide on $B_{r_i}(\bar{p}_i)$.
        Indeed,  let ${\gamma} \subset X$ be a curve between two points $p_1,p_2 \in B_{r_i}(\bar{p}_i)$ realizing the distance $\sd(p_1,p_2)$. If ${\gamma} \subset \partial E \times \bb{R}_+$, it follows that $\tilde{\sd}(p_1,p_2) \leq \sd(p_1,p_2)$. At the same time, if ${\gamma} \not\subset \partial E \times \bb{R}_+$, then it connects $p_1$ to a point in $\partial E \times \{0\}$. Hence, $\gamma$ has length greater than $9r_i$, contradicting the fact that $\sd(p_1,p_2) \leq 2r_i$. This proves that $\tilde{\sd}(p_1,p_2) \leq \sd(p_1,p_2)$.
        Let now $\tilde{\gamma} \subset \partial E \times \bb{R}$ be a curve between two points $p_1,p_2 \in B_{r_i}(\bar{p}_i)$ realizing the distance $\tilde{\sd}(p_1,p_2)$. Arguing as before, and using $\tilde{\sd}(p_1,\bar{p}_i) \leq \sd(p_1,\bar{p}_i) \leq r_i$, it follows that $\tilde{\gamma} \subset \partial E \times \bb{R}_+$, so that $\sd(p_1,p_2) \leq \tilde{\sd}(p_1,p_2)$.
        This proves our claim that $\sd$ and $\tilde{\sd}$ coincide on $B_{r_i}(\bar{p}_i)$.

        As a consequence, there exists an open ball $B$ in $(X_\infty,\sd_\infty)$ which arises as GH limit of $(B_{r_i}(\bar{p}_i),\tilde{\sd}/r_i)$. Hence, $B$ is isometric to an open ball in the blow-down of $\partial E \times \bb{R}$, which has Hausdorff dimension $2$ by Step 1. Hence, an open set of $X_\infty$ has Hausdorff dimension $2$. Since $X_\infty$ an Alexandrov space, it also has Hausdorff dimension $2$.
        \end{proof}
        \end{proposition}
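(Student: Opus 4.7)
The plan splits along the three claims. First, combining Lemma \ref{lem:magia}, which turns \eqref{eq:assumptionVol} into the integral condition \eqref{eq:weakassump}, with Theorem \ref{T1} gives that $\partial E$ is smooth and totally geodesic in $M$. By Lemma \ref{Lemma:connectedness}, both $E$ and $\partial E$ are connected, so $(E,g)$ and $(\partial E,g) \times \bb R_+$ are connected Alexandrov spaces with non-negative sectional curvature and isometric boundaries. Perelman--Petrunin's gluing theorem then produces the Alexandrov structure with $\Sec \geq 0$ on $(X,\sd)$.

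For the second claim, I would use Lemma \ref{L4} and verify separately that $E$ is sub- and super-minimizing in $X$. Sub-minimality is routine: any competitor $C \subset E$ lies inside $M$, and the $X$- and $M$-perimeters of $C$ and of $E$ agree in $U$. For super-minimality, any competitor $C \supset E$ satisfies $F := C \setminus E \subset \partial E \times \bb R_+$, and a direct coarea computation on the product cylinder shows $P_X(C,U) \geq P_X(E,U)$: any point of $\partial E \times \{0\}$ not filled by $F$ recovers the boundary of $E$ there, while the portion of $\partial^* F$ lying in the open cylinder compensates for the portion where $F$ does cover $\partial E \times \{0\}$.

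Turning to the blow-down, the starting point is Theorem \ref{T7} combined with \eqref{eq:assumptionVol}, which forces
\[
\liminf_{r \to + \infty} \frac{P(E, B_r(p))}{r} < + \infty.
\]
Because the intrinsic distance on $\partial E$ dominates the restriction of the ambient distance, this yields $\liminf_{r\to\infty} \aH^{n-1}(B^{\partial E}_r(p))/r < +\infty$. As $(\partial E,g)$ is a totally geodesic submanifold of $M$, it has $\Ric \geq 0$, so Lemma \ref{L3} applies and gives a unique, one-dimensional blow-down for $\partial E$.

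To upgrade this to the blow-down of $X$, I would base the rescalings at points $\bar p_i := (p, 10 r_i) \in \partial E \times \bb R_+$ with $r_i \uparrow + \infty$. Inside the ball $B_{r_i}(\bar p_i) \subset X$, any path reaching $\partial E \times \{0\}$ has length at least $9 r_i$, so the $X$-distance coincides there with the product distance on $\partial E \times \bb R$. Passing to the limit, an open ball of the blow-down $X_\infty$ becomes isometric to an open ball of the blow-down of $\partial E \times \bb R$, which is $2$-dimensional by the previous step. Since $X_\infty$ is a non-negatively curved Alexandrov space, it has constant Hausdorff dimension, hence equal to $2$, and its blow-down is automatically a metric cone. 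The main obstacle I anticipate is precisely this last metric identification near $\bar p_i$ together with its stability under pointed Gromov--Hausdorff convergence, since it is the mechanism that transfers the one-dimensional blow-down of $\partial E$ into a two-dimensional blow-down of $X$.
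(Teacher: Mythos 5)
Your proposal follows essentially the same route as the paper: gluing via Perelman--Petrunin after establishing total geodesicity and connectedness, minimality via Lemma \ref{L4}, the linear perimeter growth from Theorem \ref{T7} feeding into Lemma \ref{L3} to get a one-dimensional blow-down of $\partial E$, and the localization at $\bar p_i=(p,10r_i)$ where $\sd$ agrees with the product distance to transfer this to a two-dimensional blow-down of $X$. The details you supply for sub-/super-minimality and for the stability of the metric identification are consistent with what the paper leaves implicit.
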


\begin{lemma}\label{lem:volumegrowthgluing}
Let $(M^n,g)$ be a Riemannian manifold with $\Sec_M \geq 0$.
    Let $E \subset M$ be a smooth perimeter minimzing set with totally geodesic boundary.
    Consider the metric space $(X,\sd)$ obtained by gluing $(E,g)$ and $(\partial E,g) \times \bb{R}_+$ along their isometric boundaries. Let $p \in \partial E$ and $r>0$. Then, 
    \[
    \aH^n(B_r^X(p)) \leq c(n)\ssf{Vol}(B_r^M(p)).
    \]
    \begin{proof}
        Since $E$ is perimeter minimizing in $X$, by Theorem \ref{T7}, for every $r>0$, it holds
        \[
        \aH^n(B_r^X(p)) \leq c(n) \aH^n(B_r^X(p) \cap E). 
        \]
        By definition of $X$, it holds $B_r^X(p) \cap E=B_r^M(p) \cap E$. Hence,
        \[
        \aH^n(B_r^X(p)) \leq c(n) \aH^n(B_r^M(p) \cap E) \leq c(n)\aH^n(B_r^M(p)), 
        \]
        as claimed.
    \end{proof}
    \end{lemma}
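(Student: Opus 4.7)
The plan is to apply the density lower bound of Theorem \ref{T7} to $E$ inside the glued space $X$ at the boundary point $p$, and then to compare $\aH^n(E \cap B^X_r(p))$ with $\ssf{Vol}(B^M_r(p))$ by means of a length-nonincreasing map $X \to M$.

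First I would verify that $E$ is perimeter minimizing in $(X,\sd,\aH^n)$. Following exactly the argument at the start of the proof of Proposition \ref{P:blowdownDim2}, the totally geodesic boundary assumption and $\Sec_M \geq 0$ guarantee that $(E,g)$ and $(\partial E,g) \times \bb{R}_+$ are Alexandrov spaces with non-negative sectional curvature and isometric boundaries, so by Petrunin's gluing theorem $(X,\sd)$ is itself Alexandrov with non-negative sectional curvature; equipped with $\aH^n$, it is in particular an $\RCD(0,n)$ space. A direct check shows that $E \subset X$ is both sub-minimizing and super-minimizing, so Lemma \ref{L4} yields that $E$ is perimeter minimizing in $X$. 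Theorem \ref{T7} applied at $p \in \partial E$ then provides a constant $\gamma_0 = \gamma_0(n) > 0$ with
\[
\aH^n\big(E \cap B^X_r(p)\big) \geq \gamma_0 \, \aH^n\big(B^X_r(p)\big).
\]

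The key geometric step is to establish the inclusion $E \cap B^X_r(p) \subset B^M_r(p)$, equivalently $d_M(p,q) \leq \sd(p,q)$ for every $q \in E$. Given a rectifiable curve $\gamma$ from $p$ to $q$ in $X$, I would decompose it into arcs lying in $E$ and arcs lying in $\partial E \times \bb{R}_+$, meeting at $\partial E$. The $E$-arcs are already curves in $M$ of the same length. For a cylinder arc $s \mapsto (\alpha(s),t(s))$, the length $\int \sqrt{|\alpha'|_g^2 + (t')^2}\, ds$ dominates $\int |\alpha'|_g \, ds$, which is the length of the projection $\alpha$ into $\partial E \subset M$. Concatenating the $E$-arcs with these projected curves produces a curve in $M$ from $p$ to $q$ of length at most $\ell(\gamma)$; taking the infimum over $\gamma$ gives $d_M(p,q) \leq \sd(p,q)$.

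Combining these ingredients yields
\[
\aH^n(B^X_r(p)) \leq \gamma_0^{-1} \aH^n\big(E \cap B^X_r(p)\big) \leq \gamma_0^{-1} \aH^n\big(B^M_r(p)\big) = \gamma_0^{-1}\, \ssf{Vol}\big(B^M_r(p)\big),
\]
which is the desired bound with $c(n) := \gamma_0^{-1}$. The only mild subtlety I anticipate is the length-nonincreasing projection used to compare $d_M$ and $\sd$ on $\{p\} \times E$; everything else is a direct invocation of results already established in the paper.
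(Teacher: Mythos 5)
Your proposal is correct and follows essentially the same route as the paper: the density lower bound of Theorem \ref{T7} for the perimeter minimizer $E$ in the glued Alexandrov space $X$, followed by comparing $E \cap B^X_r(p)$ with $B^M_r(p)$. The only difference is that you prove the inclusion $E \cap B^X_r(p) \subset B^M_r(p)$ explicitly via the length-nonincreasing projection of cylinder arcs onto $\partial E$, a step the paper dispatches with ``by definition of $X$''; your version is the more careful one, and correctly isolates the one inequality $d_M(p,q) \leq \sd(p,q)$ that is actually needed.
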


In the following two results we study the two possible blow-downs of the glued space $X$ considered in Proposition \ref{P:blowdownDim2}. We recall that, given a metric space $(Z,\sd_Z)$, we denote by $C(Z)$ the metric cone with section $Z$. In the next theorem, we show that, when $R \in (0,1]$, the cone $C(S^1_R)$ is an $\RCD(0,N)$ space if and only if it is equipped with the 2-dimensional Hausdorff measure $\aH^2$ (up to a constant). 

\begin{thm} \label{P1}
    Let $(C(S^1_R),\sd,\m)$ with $R\leq 1$ be an $\RCD(0,N)$ space. Then $\m=c\aH^2$, for some constant $c>0$.
\end{thm}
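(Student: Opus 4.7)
The plan is to prove that the density $\rho$ of $\m$ with respect to the two-dimensional Hausdorff measure $\aH^2$ is constant on the smooth part $C(S^1_R)\setminus\{o\}$ (where $o$ is the tip); since $\aH^2(\{o\})=0$ and an $\RCD(0,N)$ measure has no atoms, this will immediately give $\m=c\,\aH^2$ on all of $C(S^1_R)$.

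First I would invoke that $C(S^1_R)\setminus\{o\}$ is locally isometric to open convex subsets of $\mathbb{R}^2$, so on each local Euclidean chart $\tilde U\subset\mathbb{R}^2$ the pulled-back structure $(\tilde U,|\cdot|,\rho\,\lambda^2)$ is $\RCD(0,N)$. Lemma~\ref{L7} then tells me that $\rho$ is locally Lipschitz, and its proof, via \cite{MondinoInventiones}, establishes more: along $\aH^1$-almost every line $l\subset\tilde U$, the restriction $\rho|_l$ is a $\CD(0,2)$ density on $l$, i.e.\ $\rho|_l$ is concave. Combined with Lipschitz continuity of $\rho$, this upgrades to concavity along \emph{every} Euclidean line in every local chart, and hence along every geodesic of $C(S^1_R)\setminus\{o\}$.

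Next I would extend any unit-speed non-radial geodesic $\gamma\colon[a,b]\to C(S^1_R)\setminus\{o\}$ to a curve defined on all of $\mathbb{R}$. The key input is the explicit distance-to-tip formula in local Euclidean coordinates, $r(s)=\sqrt{r_{\min}^2+(s-s_*)^2}\ge r_{\min}>0$, which shows that the locally geodesic extension never reaches the tip; note that for $R<1/2$ the extension wraps around $C(S^1_R)$ and ceases to be globally minimizing, but this causes no problem for the argument. Indeed, by the previous step $\rho$ is concave along each small subinterval of $\gamma$, and local concavity of a Lipschitz function on $\mathbb{R}$ automatically promotes to global concavity; since a positive concave function on $\mathbb{R}$ must be constant (otherwise a descending secant forces negative values at infinity), $\rho$ is constant along $\gamma$. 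Finally, given any $p,q\in C(S^1_R)\setminus\{o\}$, I connect them either by a single non-radial geodesic (if they lie on distinct radial rays) or by a chain of two non-radial geodesics through an auxiliary off-ray point, obtaining $\rho(p)=\rho(q)$ in either case. Hence $\rho$ is constant on the smooth part and $\m=c\,\aH^2$.

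The main obstacle is the construction and analysis of the extended locally geodesic curve $\gamma\colon\mathbb{R}\to C(S^1_R)\setminus\{o\}$: one must carefully verify that it never approaches the tip despite the cone's singularity (this is a consequence of the explicit parameterization of non-radial geodesics in the flat cone), and that the global concavity of $s\mapsto\rho(\gamma(s))$ holds even when $\gamma$ revisits the same points of $C(S^1_R)$ by wrapping around (this follows from the purely local character of concavity combined with the Lipschitz regularity of $\rho$).
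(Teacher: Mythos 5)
Your argument is essentially correct, but it takes a genuinely different route from the paper. The paper unwraps the cone via the local isometry $\varphi(r,\theta)=(r,(R\cos(\theta/R),R\sin(\theta/R)))$ onto the slit plane, restricts the pulled-back measure to half-planes $(-\infty,-\delta]\times\bb{R}$ (which are convex, hence $\RCD(0,N)$ by local-to-global, and contain lines), and invokes Gigli's Splitting Theorem to force the measure to be a product in each of the four half-plane directions, whence $\tilde\m=c\lambda^2$. You instead work with the density $\rho=\de\m/\de\aH^2$ and show it is constant along every complete non-radial locally geodesic curve, using the one-dimensional localization (concavity of a power of $\rho$ along a.e.\ line, upgraded to every line via the Lipschitz regularity of Lemma \ref{L7}) together with the elementary fact that a positive function on $\bb{R}$ with a concave positive power is constant. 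The two proofs are cousins: the complete geodesics you use are exactly the lines the splitting theorem exploits. What the paper's route buys is that it needs no a priori regularity of $\m$: the splitting theorem applies to arbitrary $\RCD$ measures. Your route requires first knowing $\m=\rho\,\aH^2$ with $\rho\in\sL^1_{loc}$; this is true, but it is a nontrivial input (constancy of the essential dimension, ruling out essential dimension $1$, and the representation of the reference measure, i.e.\ the stratification results the paper cites in Step 2 of Proposition \ref{prop:main}), and you should state it explicitly rather than assume a density exists. Two smaller points: the restricted density along a line is a $\CD(0,\cdot)$ density, so what is concave is a power $\rho_l^{1/(N-1)}$ (or $\rho_l^{1/(N-2)}$) rather than $\rho_l$ itself --- harmless for your conclusion, since any positive concave power on all of $\bb{R}$ is constant --- and your geometric claims (the extension $r(s)=\sqrt{r_{\min}^2+(s-s_*)^2}$ avoiding the tip, the two-segment chain for points on a common ray, and the local-to-global promotion of concavity along the wrapping curve) are all correct.
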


\begin{proof}
    Consider the map $\varphi: \bb R^2\setminus ([0,+\infty)\times \{0\}) \to C(S^1_R)$ defined in polar coordinates as 
    \begin{equation}
        \varphi(r, \theta) = \big( r, (R \cos (\theta/R), R \sin (\theta/R))\big).
    \end{equation}
    Intuitively, the map $\varphi$ wraps $\bb R^2\setminus ([0,+\infty)\times \{0\})$ around the cone $C(S^1_R)$. Observe that, by definition, $\varphi$ is a local isometry, thus we can define a measure $\tilde \m$ on $\bb R^2\setminus ([0,+\infty)\times \{0\})$ by requiring that locally $\tilde \m= (\varphi^{-1})_\# \m$. Then, for every point $x\in \bb R^2\setminus ([0,+\infty)\times \{0\})$, there exists a closed convex neighborhood $C\ni x$ such that $(C
    ,\sd_{eu},\tilde \m \mres C)$ is an $\RCD(0,N)$ space. 
    
    Now, for every $\delta>0$, consider the set $V_\delta = (-\infty,-\delta] \times \bb R$. Using the local-to-global property of $\RCD(0,N)$ (see \cite{CavallettiMilman}), we deduce that $(V_\delta,\sd_{eu},\tilde \m \mres V_\delta)$ is an $\RCD(0,N)$ space. Gigli's Splitting Theorem \cite{gigli2013splittingtheoremnonsmoothcontext} ensures that $\tilde \m \mres V_\delta= \mathfrak{n}^\delta\times\lambda^1$ for some measure $\mathfrak{n}^\delta$ on $(-\infty,-\delta]$. As this holds for every $\delta>0$, we conclude that $\tilde \m \mres ((-\infty,0)\times \bb R)= \mathfrak{n}\times\lambda^1$ for some measure $\mathfrak{n}$ on $(-\infty,0)$. Reasoning in the same way, we get that $\tilde \m \mres  ( \bb R \times (0,+\infty))= \mathfrak{n}'\times \lambda^1$ for some measure $\mathfrak{n}'$ on $(0,+\infty)$. Similarly, we obtain an analogous splitting of the measure in the lower half plane. Combining everything, we deduce that $\tilde \m = c \lambda^2= c\aH^2$, for some constant $c>0$. Finally, as $\varphi$ is a local isometry and $\m=\varphi_\# \tilde \m$ locally, we conclude that $\m=c\aH^2$. 
\end{proof}

The next proposition is the core technical result of the paper.

\begin{proposition}\label{prop:main}
    Let $(M^n,g,p)$ be a Riemannian manifold with $\Sec_M \geq 0$ and such that
    \begin{equation}\label{eq:assumption}
        \liminf_{r \to + \infty} \frac{\ssf{Vol}(B_r(p))}{r^2}  \in (0,+\infty).
    \end{equation}
   Let $E \subset M$ be a smooth perimeter minimzing set such that $\partial E$ is non-compact.
    Consider the metric space $(X,\sd)$ obtained by gluing $(E,g)$ and $(\partial E,g) \times \bb{R}_+$ along their isometric boundaries. If the blow-down of $X$ is a cone of the type $C([0,l])$, then $l=\pi$. Moreover, when taking the blow-down, the set $E \subset X$ converges in $\sL^1_{loc}$ sense to $C([0,\pi/2]) \subset C([0,\pi])$ or to $C([\pi/2,\pi]) \subset C([0,\pi])$.
    \end{proposition}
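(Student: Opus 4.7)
The plan is to execute the blow-down strategy outlined in the introduction. First, take $r_i \uparrow +\infty$ realizing \eqref{eq:assumption} and set $V_i := \aH^n(B^X_{r_i}(x))$. By Gromov precompactness and Theorem \ref{stability}, the rescaled spaces $(X, \sd/r_i, V_i^{-1}\aH^n, x)$ converge (up to a subsequence) in pmGH to an $\RCD(0,N)$ limit $(X_\infty, \sd_\infty, \m_\infty, x_\infty)$. Since $(X, \sd)$ is Alexandrov with non-negative curvature, its metric blow-down is unique, and by hypothesis $X_\infty = C([0,l])$ for some $0 < l \leq \pi$. Propositions \ref{P|compactness} and \ref{P26} then yield $E_\infty \subset X_\infty$ perimeter minimizing with respect to $\m_\infty$ as the $\sL^1_{loc}$-limit of $E$; the scale-invariance of the limiting cone forces $E_\infty$ to itself be a cone with tip at $x_\infty$, so $E_\infty = C([a,b])$ with $0 \leq a \leq b \leq l$.

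Next, I would establish sharper information about $\m_\infty$ and the volume growth of $X$. Since $\mathrm{int}(C([0,l]))$ is locally Euclidean, Lemma \ref{L7} gives $\m_\infty = f\,\aH^2$ with $f \in \Lip_{loc}(\mathrm{int}(C([0,l])))$. Step 1 of Proposition \ref{P:blowdownDim2} combined with Lemma \ref{L3} applied to $\partial E$ produces uniform linear volume growth of $\partial E$, and chaining with Theorem \ref{T7} and Lemma \ref{lem:volumegrowthgluing} upgrades \eqref{eq:assumption} to the two-sided bound $c^{-1}r^2 \leq \aH^n(B^X_r(p)) \leq c r^2$ for all large $r$. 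Paired with the concavity of $f^{1/(N-1)}$ along half-lines emanating from the tip (from the $\CD$ condition), this tightly constrains the density $f$.

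The key technical step, and the one I expect to be the main obstacle, is to use the comparison results of \cite{Weak,lapb} to produce a modified measure $\tilde{\m} = \tilde{f}\,\aH^2$ on $C([0,l])$ satisfying the following: $(C([0,l]), \sd, \tilde{\m})$ is $\RCD(0,N+1)$; the set $E_\infty$ remains perimeter minimizing with respect to $\tilde{\m}$; and $\tilde{f}$ converges at infinity to a constant $c>0$, so $\tilde{\m}$ is asymptotic to $c\aH^2$. Performing a second blow-down of $(C([0,l]), \sd, \tilde{\m}, x_\infty)$ then yields $(C([0,l]), \sd, c\aH^2, x_\infty)$, since the metric cone is scale-invariant while the measure limit is governed by $\tilde f$ at infinity; as $E_\infty$ is already a cone, it converges to itself and is perimeter minimizing with respect to $c\aH^2$.

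Finally, in $(C([0,l]), \sd, c\aH^2)$, I would conclude by a chord competitor argument using Lemma \ref{L6}. If $E_\infty = C([0,b])$ with $0 < b < l$, define $F$ to equal $E_\infty$ outside $B_s(x_\infty)$ and, inside $B_s$, to be bounded by the Euclidean chord from a point $(s_0, 0) \in C(\{0\})$ to the endpoint $(s\cos b, s\sin b)$ of the ray $\theta = b$ on $\partial B_s$. The chord length $\sqrt{s^2 - 2 s_0 s \cos b + s_0^2}$ is minimized at $s_0 = s\cos b$ with value $s|\sin b|$, and by Lemma \ref{L6} one has $P(E_\infty, B_s) = cs$; perimeter minimality thus forces $\sin b \geq 1$, hence $b = \pi/2$. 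Applying the symmetric chord (from $C(\{l\})$) to the complement $C([b,l])$ yields $l - b = \pi/2$, hence $l = \pi$. The symmetric case $E_\infty = C([a,l])$ is analogous, while the two-boundary case $0 < a < b < l$ is excluded because the competitors for both rays would force $a = \pi/2$ and $l - b = \pi/2$, giving $l > \pi$. Therefore $l = \pi$ and $E_\infty \in \{C([0,\pi/2]), C([\pi/2,\pi])\}$, as claimed.
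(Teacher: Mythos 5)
Your outline reproduces the strategy sketched in the paper's introduction, and your final chord-competitor computation in $(C([0,l]),\sd,c\aH^2)$ is a correct way to extract $a\ge\pi/2$ (the paper is terser there). But the two steps you flag as "tightly constraining $f$" and "the key technical step / main obstacle" are precisely the content of the paper's proof, and as written your proposal has genuine gaps at both.

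First, the volume bounds. The hypothesis \eqref{eq:assumption} is a $\liminf$, so the upper bound $\aH^n(B_r^X(p))\le cr^2$ is only available along a sequence $r_i$, not "for all large $r$" as you assert (only the lower bound $\aH^n(B_r^X(p))\ge c^{-1}r^2$ holds at all scales, via Yau's linear growth of $\partial E$ and Theorem \ref{T7}). Consequently a single blow-down along $r_i$ only gives, by lower semicontinuity of the perimeter, a linear bound $P_{\m_\infty}(E_\infty,B_s(p_\infty))\le \tilde Cs$ at the normalization scale $s\approx 1$, not at all scales $s$. The paper's Steps 1--3 exist exactly to fix this: for each $t$ one rescales by $r_i/t$ to get the bound at scale $t$, then uses Lemma \ref{L6} together with the monotonicity of $f^t_{|C(\{a_t\})}$ (from the $\CD(0,n)$ concavity of the density along lines parallel to the boundary ray) to push the bound down to all $s<t/2$, and finally diagonalizes over $t_j\uparrow\infty$. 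Without this, you cannot conclude in the next step that $f_{|C(\{a\})}$ is bounded, which is what makes $\tilde f$ converge to a constant at infinity.

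Second, the comparison measure. You state the needed properties of $\tilde\m$ but do not construct it. The paper takes $\tilde f:=f\circ\pi$ with $\pi$ the nearest-point projection onto $C(\{a\})$; the inequality $f\le\tilde f$ comes from the superharmonicity of $\sd_a$ (via \cite{Weak,lapb}), i.e.\ $\nabla\sd_a\cdot\nabla f\le 0$; and $(C([0,l]),\sd,\tilde f\lambda^2)$ is $\RCD(0,n+1)$ because it is a convex subset of the product $C(\{a\})\times\bb{R}$ with density $f_{|C(\{a\})}\times 1$. Moreover, your claim that $E_\infty$ "remains perimeter minimizing" with respect to $\tilde\m$ is not what this comparison yields: since $f\le\tilde f$ with equality on $C(\{a\})$, one only gets that $C([a,l])$ is a \emph{super}-minimizer for $\tilde\m$ (competitors containing it have larger $\tilde\m$-perimeter, while its own perimeter is unchanged by Lemma \ref{L6}). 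Super-minimality survives the second blow-down and still forces $a\ge\pi/2$, so the conclusion is salvageable, but your competitor argument must be run one-sidedly. As written, the proposal identifies the right road map but omits the proofs of its two load-bearing steps.
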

    
    \begin{proof}
        Let $(C([0,l]),\sd_\infty,p_\infty)$ be the blow-down of $(X,\sd,p)$, with $p_\infty$ being the tip of $C([0,l])$. We remark that this blow-down does not depend on the sequence of rescalings. According to  assumption \eqref{eq:assumption} combined with Lemma \ref{lem:volumegrowthgluing}, we consider a sequence $r_i\uparrow +\infty$ such that
        \begin{equation} \label{eq:CrescitascaleRi}
            \aH^n(B_{r_i}(p)) \leq C r_i^2, \qquad \text{for every }i\in \bb N.
        \end{equation}
        
        \textbf{Step 1:} Fix any $t>0$ and consider the sequence of pointed metric measure spaces 
        \begin{equation}\label{eq:sequence}
            \Big(X, \frac{\sd}{r_i/t}, \frac{\aH^n}{\aH^n(B_{r_i/t}(p))},p \Big) \to (C([0,l]),\sd_\infty,\m^t_\infty, p_\infty), 
        \end{equation}
        in pmGH sense (up to a subsequence ), for some blow-down measure $\m^t_\infty$. Then, there exist $0 \leq a_t < b_t \leq l$ such that $Y^t_\infty=C([a_t,b_t]) \subset C([0,l])$ is a perimeter minimizing set in $(C([0,l]),\sd_\infty,\m^t_\infty)$ with
        \begin{equation} \label{Eq:PerimeterLinear}
            P_{\m_\infty^t}(Y_\infty^t, B_t(p_\infty))\leq \tilde C t,
        \end{equation}
        for a constant $\tilde C$ not depending on $t$.
        \medskip
        
        \noindent Consider the closed manifold $(E,g)$ and observe that its induced metric coincides with the restriction of $\sd$ to $E$. Hence, the blow-down of $(E,g)$ is isometric to a subset of the blow-down of $(X,\sd)$. In particular, there is a closed subset $Y^t_\infty \subset C([0,l])$ with $p_\infty \in Y^t_\infty$ such that $(Y^t_\infty,\sd_\infty)$ is isometric to the blow-down of $(E,g)$. 
        Since $(E,g)$ is an Alexandrov space with non-negative sectional curvature, its blow-down is a cone (see for instance \cite[Theorem 2.11]{AntonelliPozzettaAlexandrov}). Moreover, when identifying the blow-down of $(E,g)$ with $Y^t_\infty$, the tip of such cone is identified with the tip $p_\infty \in C([0,l])$.
        Hence, there exist $0 \leq a_t \leq b_t \leq l$ such that $Y^t_\infty=C([a_t,b_t]) \subset C([0,l])$.

        To prove that $Y^t_\infty$ is perimeter minimizing, we are going to show that it is the $\sL^1_{loc}$ limit of $E$, along the sequence in \eqref{eq:sequence}.
        To this aim, call $E^t_\infty$ the perimeter minimizing set in $(C([0,l]),\sd_\infty,\m^t_\infty)$ that arises as $\sL^1_{loc}$ limit of $E$. This set is non-trivial, i.e. its boundary is non-empty, by Proposition \ref{P26}. We consider the closed representative of $E^t_\infty$ and we claim that $Y^t_\infty=E^t_\infty$. 

        In the space $(Z,\sd_Z)$ realizing the pmGH convergence to the blow-down, denoting $\m_i^t=[\aH^n(B_{r_i/t}(p))]^{-1} \aH^n$, the measures $1_E\m^t_i$ converge weakly to the measure $1_{E^t_\infty}\m^t_\infty$ and the set $E$ converges in Hausdorff sense on compact sets of $Z$ to $Y^t_\infty$. 
        We first show that $E^t_\infty \subset Y^t_\infty$. 
        Since (the closed representative of) $E^t_\infty$ is the closure of its open representative, any point $x \in E^t_\infty$ is in the support of $1_{E^t_\infty}\m^t_\infty$. Since $1_E\m^t_i \to 1_{E^t_\infty}\m^t_\infty$ weakly, there exists a sequence of points $x_i \in E$ such that $x_i \to x$ in $(Z,\sd_Z)$. Hence, we deduce $x \in Y^t_\infty$, proving that $E^t_\infty \subset Y^t_\infty$.

        To show the other inclusion, we fix any $x \in Y^t_\infty$. Then, there exists a sequence $x_i \in \bar{E}$ converging to $x$ in $(Z,\sd_Z)$. By Theorem \ref{T7}, $x$ belongs to the support of the limit measure $1_{E^t_\infty} \m^t_\infty$. Hence, $x$ belongs to the closed representative of $E^t_\infty$, proving that $Y^t_\infty=E^t_\infty$. Since $E^t_\infty$ has an open representative, it follows that $a_t<b_t$.

        We proceed now to prove \eqref{Eq:PerimeterLinear}. 
        We use notations $B$, $B^i$, and $B^\infty$ for balls w.r.t. the distances $\sd$, $\sd/(r_i/t)$ and $\sd_\infty$ in the respective spaces.
        By the lower semicontinuity of perimeters under $\sL^1_{loc}$ convergence, we get that 
        \begin{equation} \label{Eq:LscPert}
        P_{\m^t_\infty}(Y^t_\infty,B^\infty_t(p_\infty)) \leq \liminf_{i \to + \infty} P_{\m_i^t}(E,B^i_t(p))
        = \liminf_{i \to + \infty} \frac{(r_i/t) P(E,B_{r_i}(p))}{\aH^n(B_{r_i/t}(p))}.
        \end{equation}        
        We now denote by $B^{\partial E}$ balls in $\partial E$ w.r.t. the metric induced by $(\partial E,g)$. The manifold $(\partial E,g)$ has non-negative sectional curvature according to Lemma \ref{lem:magia} and Theorem \ref{T1}.  Since $\partial E$ is non-compact, using  \cite{Y}, it holds that
        \begin{equation*}
             \aH^{n-1}(B^{\partial E}_r(p))/r \geq c_1 
        \end{equation*}
        for every $r>0$ sufficiently large and some $c_1 >0$ depending on $\partial E$. 
        Using that $B^{\partial E}_r(p) \subset B_r(p) \cap \partial E$, it follows that for $r>0$ sufficiently large it holds
        \begin{align*}
            c_1 r & \leq \aH^{n-1}(B^{\partial E}_r(p)) \leq  
            \aH^{n-1}(\partial E \cap B_r(p))
             \leq 
            c_2 \aH^{n}(B_r(p))/r ,
        \end{align*}
        for some constant $c_2 >0$ depending only on $n$, where the last step follows from Theorem \ref{T7}. 
        Combining this with \eqref{Eq:LscPert}, \eqref{eq:CrescitascaleRi} and Theorem \ref{T7}, it holds
        \[
        P_{\m^t_\infty}(Y^t_\infty,B^\infty_t(p_\infty)) \leq \tilde C t,
        \]
        for a constant $\tilde C$ not depending on $t$.\medskip
        
       \textbf{Step 2:} The estimate \eqref{Eq:PerimeterLinear} can be improved to 
        \begin{equation} \label{Eq:PerimeterLinear2}
            P_{\m_\infty^t}(Y_\infty^t, B^\infty_s(p_\infty))\leq 20 \tilde C s, \qquad \text{for every }s\in (0,t/2).
        \end{equation}\vspace{-5pt}\\
        Since $(C([0,l]),\sd_\infty,\m^t_\infty)$ is an $\RCD(0,n)$ space, by the stratification results \cite{MondinoNaber,KellMondino,GPstrat,DePhMR,BScon}, it holds $\m^t_\infty=f^t \lambda^2$. By Lemma \ref{L7}, $f^t$ is locally Lipschitz in the interior of its domain. Assume $a_t\neq 0$.
        By \cite{MondinoInventiones}, for $\aH^1$-a.e. line $r$ parallel to $C(\{a_t\})$, the restricted function $f^t_r:r \cap C([0,l]) \to \bb{R}_+$ is a $\ssf{CD}(0,n)$ density on $r \cap C([0,l])$. Hence, $(f_r^t)^{1/(n-1)}$ is concave on $r \cap C([0,l])$. Since $(f_r^t)^{1/(n-1)}$ is positive and $r \cap C([0,l])$ is a half line, $(f_r^t)^{1/(n-1)}$ increases as one moves away from the endpoint. Since $f^t$ is continuous, we conclude that $f^t_r$ is increasing for \emph{every} line $r$ parallel to $C(\{a_t\})$. The same holds for every line parallel to $C(\{b_t\})$, if $b_t\neq l$.

        We now show that, when $a_t\neq 0$, it holds
        \begin{equation} \label{Eq:IntermediateStep2}
        f^t_{|C(\{a_t\})}(z) \leq 10 \tilde{C}, \qquad \text{for every } z \in (0,t/2).
        \end{equation}
        By Lemma \ref{L6}, it follows that 
        \[
        \int_0^t f^t_{|C(\{a_t\})}(z) \, dz
        \leq P_{\m_\infty^t}(Y^t_\infty,B^\infty_t(p_\infty)),
        \]
        where in the r.h.s. we see $f^t_{|C(\{a_t\})}$ as a function defined on $\bb{R}_+$. Combining this with Step 1, we deduce  
        \begin{equation*} 
            \int_0^t f^t_{|C(\{a_t\})}(z) \, dz \leq \tilde C t.
        \end{equation*}
         Since $f^t_{|C(\{a\})}$ is increasing, the previous inequality implies \eqref{Eq:IntermediateStep2}.
         
         By an analogous argument, if $b_t \neq l$, \eqref{Eq:IntermediateStep2} holds with $b_t$ in place of $a_t$. Combining Lemma \ref{L6} with \eqref{Eq:IntermediateStep2}, we conclude the proof of Step 2. \medskip

        \textbf{Step 3:} There exist a measure $\m_\infty$ on $C([0,l])$ and $Y_\infty=C([a,b]) \subset C([0,l])$ with $0 \leq a < b \leq l$ such that $Y_\infty$ is a perimeter minimizing set in $(C([0,l]),\sd_\infty,\m_\infty)$ and 
        \begin{equation}
            P_{\m_\infty}(Y_\infty, B^\infty_t(p_\infty))\leq 20 \tilde C t, \qquad \text{for every }t>0.
        \end{equation}\vspace{-5pt}\\
        Let $t_j \uparrow +\infty$. Consider the corresponding $\m_\infty^{t_j}$ and $Y_\infty^{t_j}$, obtained in the previous steps. Since each $\m_\infty^{t_j}$ is a limit of normalized measures, it holds $\m_\infty^{t_j}(B_1^\infty(p_\infty))=1$. Hence, up to a subsequence, $\m_\infty^{t_j} \to \m_\infty$ weakly for some limit measure $\m_\infty$. Similarly, up to a subsequence, $Y_\infty^{t_j} \to Y_\infty$ in $\sL^1_{loc}$ sense for some non-trivial perimeter minimizing set $Y_\infty=C([a,b])$ with $a < b$. By lower semicontinuity of perimeters under $\sL^1_{loc}$ convergence and Step 2, we deduce
        \[
        P_{\m_\infty}(Y_\infty,B^\infty_s(p_\infty))
        \leq 
        \liminf_{j \to +\infty} P_{\m_\infty^{t_j}}(Y_\infty^{t_j}, B^\infty_s(p_\infty))\leq 20 \tilde{C} s \qquad \text{ for every } s>0.
        \]\vspace{-5pt}\\
        \indent \textbf{Step 4:} $l=\pi$ and either $Y_\infty=C([0,\pi/2])$ or $Y_\infty=C([\pi/2,\pi])$.\medskip

        \noindent As before, it holds $\m_\infty=f\lambda^2$, for some function $f$ which is locally Lipschitz in the interior of $C([0,l])$ (Lemma \ref{L7}). Without loss of generality, we assume $a\neq 0$. In $C([0,l]) \setminus C(\{a\})$, consider the distance $\sd_a$ from from $C(\{a\})$. By \cite{Weak} and \cite{lapb}, the function $\sd_a$ is superharmonic in $C([0,l]) \setminus C(\{a\})$ w.r.t. the weighted measure $\m_\infty= f\lambda^2$, which implies that 
        \begin{equation} \label{E10}
        \nabla \sd_a \cdot \nabla f \leq 0.
        \end{equation}
        Let $\pi:C([0,l]) \to C(\{a\})$ be the nearest point projection on $C(\{a\})$ and let $\tilde{f}(x):=f(\pi(x))$. 
        
        As before, for $\aH^1$-a.e. line $r$ parallel to $C(\{a\})$, the restricted function $f_r:r \cap C([0,l]) \to \bb{R}_+$ is a $\ssf{CD}(0,n)$ density on $r \cap C([0,l])$. Since $f$ is continuous, $f_{|C(\{a\})}$ is a $\CD(0,n)$ density as well. In particular, $f_{|C(\{a\})}$ is increasing.
        Then, the space $(C([0,l]),\sd_\infty,\tilde{f}\lambda^2)$ is an $\RCD(0,n+1)$ space, being a convex subset of $C(\{a\})\times \bb{R}$, equipped with the product distance and the product measure $(f_{|C(\{a\})} \lambda^1 ) \times \lambda^1$. 

        Consider the pmGH limit of the sequence $(C([0,l]),\sd_\infty/i,\tilde{\m}_\infty/\tilde{\m}_\infty(B^\infty_i(p_\infty)))$ as $i \uparrow + \infty$, where we set $\tilde{\m}_\infty:=\tilde{f}\lambda^2$. We claim that such pmGH limit is isomorphic to $(C([0,l]),\sd_\infty,c\lambda^2)$ for some constant $c>0$. 
        Indeed, each space $(C([0,l]),\sd_\infty/i,\tilde{\m}_\infty/\tilde{\m}_\infty(B^\infty _i(p_\infty)))$ is isomorphic as a metric measure space to $(C([0,l]),\sd_\infty,\tilde{f}_i \lambda^2)$,
        where
        \begin{equation} \label{E11}
        \tilde{f}_i(x):=\frac{\tilde{f}(ix)}{\int_{B_1^\infty(p_\infty)} \tilde{f}(iz) \, d\lambda^2(z)}.
        \end{equation}
        Since $f_{|C(\{a\})}$ is increasing, it follows by Step 3 and Lemma \ref{L6} that $f_{|C(\{a\})}$ is bounded. Therefore, using the definition of $\tilde{f}$, it holds
        \[
        \lim_{x \to + \infty}\tilde{f}(x)=c'>0.
        \]
        Combining this with \eqref{E11}, we deduce that $\tilde{f}_i \lambda^2$ converges weakly in $C([0,l])$ to $c \lambda^2$ for some $c>0$. This proves that
        \[
        (C([0,l]),\sd_\infty/i,\tilde{\m}_\infty/\tilde{\m}_\infty(B^\infty _i(p_\infty))) \to
        (C([0,l]),\sd_\infty,c\lambda^2)
        \]
in pmGH sense as claimed.

We now claim that $C([a,l])$ 
 is a super-minimizer in $(C([0,l]),\sd_\infty,c\lambda^2)$.
Since $C([a,b])$ is perimeter minimizing in $(C([0,l]),\sd_\infty,f\lambda^2)$, it follows that $C([a,l])$ is a super-minimizer in $(C([0,l]),\sd_\infty,f\lambda^2)$.
        We now denote by $P$ perimeters in $(C([0,l]),\sd_\infty,f\lambda^2)$ and by $\tilde{P}$ perimeters in $(C([0,l]),\sd_\infty,\tilde{f}\lambda^2)$. Let $U \subset C([0,l])$ be a bounded open set and let $C \subset C([0,l])$ be such that $C \supset C([a,l])$ and $C([a,l]) \Delta C \ssubset U$. Using that $f \leq \tilde{f}$ thanks to \eqref{E10}, and that $\tilde{f}=f$ on $C(\{a\})$, using Lemma \ref{L6}, it holds that
        \[
        \tilde{P}(C([a,l]),U)=P(C([a,l]),U) \leq P(C,U) \leq \tilde{P}(C,U).
        \]
        Hence, $C([a,l])$ is a super-minimizer in $(C([0,l]),\sd_\infty/i,\tilde{\m}_\infty/\tilde{\m}_\infty(B^\infty_i(p_\infty)))$ for every $i$, which implies that it is also a super-minimizer in $(C([0,l]),\sd_\infty,c\lambda^2)$. This implies $a \geq \pi/2$.

        If $b<l$, the same argument that we used to show $a \geq \pi/2$, shows that $l-b \geq \pi/2$, so that $l>\pi$, a contradiction. If $b=l$, using again the same argument, it holds $b-a \geq \pi/2$, so that $l=\pi$ and, up to passing to the complement, $Y_\infty=C([\pi/2,\pi])$.
    \end{proof}

\begin{thm}
    Let $(M^n,g,p)$ be a pointed Riemannian manifold with $\Sec_M \geq 0$ and such that
    \[
    \liminf_{r \to + \infty} \frac{\ssf{Vol}(B_r(p))}{r^2} < + \infty.
    \]
    If $E \subset M$ is perimeter minimizing, then $M \cong N \times \bb{R}$ and, with this identification, $E \cong N \times [0,+\infty)$.
    \begin{proof}
    If $\partial E$ is compact, then $(\bar{E},g)$ is a manifold with boundary which is isometric to $\partial E \times \bb{R}_+$ by \cite{Kasue} (using that $E$ is non-compact as it is perimeter minimizing). Applying the same result to the complement of $E$, the statement follows.

    Hence, we assume that $\partial E$ is non-compact.
        Combining Theorem \ref{T1} and Lemma \ref{Lemma:connectedness} with Lemma \ref{lem:magia}, we deduce that $E \subset M$ is a smooth connected set with connected boundary. Moreover, $\partial E$ has non-negative sectional curvature. As done above, consider the metric space $(X,\sd)$ obtained by gluing $(E,g)$ and $(\partial E,g) \times \bb{R}_+$ along their isometric boundaries. 
        Let $(X_\infty,\sd_\infty)$ be the blow-down of $(X,\sd)$. 
               
        By Proposition \ref{P:blowdownDim2}, $X_\infty$ is a cone of Hausdorff dimension $2$.
        Hence, either $X_\infty=C([0,l])$ for some $l\in(0,\pi]$ or $X_\infty=C(S^1_R)$ for some $R\in(0,1]$.
         Moreover, there exists a measure $\m_\infty$ on $X_\infty$ such that $(X_\infty,\sd_\infty,\m_\infty)$ is an $\RCD(0,n)$ space and there exists a non-trivial perimeter minimizing (w.r.t. $\m_\infty$) set $E_\infty \subset X_\infty$.

         \medskip
         \textbf{Step 1}: $X_\infty \cong Y \times \bb{R}$ for some metric space $(Y,\sd_y)$ and with this identification $E_\infty \cong Y \times \bb{R}_+$.\medskip
        
        \textbf{Case 1}: $X_\infty=C([0,l])$ for some $l\in(0,\pi]$.\medskip
        
        \noindent If it holds that 
        \[
        \liminf_{r \to + \infty} \frac{\ssf{Vol}(B_r(p))}{r^2}  =0,
        \]
        then by Theorem \ref{T7} it follows
        \[
        \liminf_{r \to + \infty} \frac{\ssf{Vol}_{\partial E}(B^{\partial E}_r(p))}{r} \leq \liminf_{r \to + \infty} \frac{\aH^{n-1}(B_r(p) \cap \partial E)}{r}  =0,
        \]
        so that $\partial E$ is compact by \cite{Y}.
        Otherwise, Proposition \ref{prop:main} guarantees that $X_\infty \cong Y \times \bb{R}$ for some metric space $(Y,\sd_y)$ and that $E_\infty \cong Y \times \bb{R}_+$.\medskip

        \textbf{Case 2}: $X_\infty=C(S^1_R)$ for some $R\in(0,1]$. \medskip
        
        \noindent By Theorem \ref{P1}, it holds $\m_\infty=c\aH^2$. By \cite[Proposition 6.26]{Weak}, it holds $R=1$, so that $\X_\infty =\bb{R}^2$. Since the only perimeter minimizing set in $\bb{R}^2$ is the half space, also this case is concluded.
        \medskip
        
        \textbf{Step 2} $M \cong N \times \bb{R}$ and $E \cong N \times [0,+\infty)$. \medskip

        To prove the claim, we will show that $(\bar{E},g)$ is isometric to $\partial E \times [0,+\infty)$. If this is the case, by a mirrored argument, it holds that $(\overline{M \setminus E},g)$ is isometric to $\partial E \times [0,+\infty)$ as well, so that the claim follows.

        Let $r \subset X$ be a ray starting from $\partial E$, contained in $\partial E \times \bb{R}_+$, and such that
        \begin{equation}\label{eq:mindistE}
            \sd(E,r(t))=t \quad \text{for every } t>0.
        \end{equation}
        When considering the rescaled spaces $(X,\sd/i)$ converging to the blow-down $X_\infty$, since $E$ converges to $E_\infty \subset X_\infty$ (both in $\sL^1_{loc}$ sense and in Hausdorff sense in the space realizing the pGH convergence), the ray $r$ converges to a ray $r_\infty \subset X_\infty$ with the property that
        \[
        \sd_\infty(E_\infty
        ,r_\infty(t))=t \quad \text{for every } t>0.
        \]
        By Step 1, it holds $X_\infty \cong Y \times \bb{R}$ and, with this identification, $E_\infty \cong Y \times \bb{R}_+$. By reflecting, we obtain a new identification
        $X_\infty \cong Y \times \bb{R}$ such that $E_\infty \cong Y \times \bb{R}_-$.
        The ray $r_\infty$ is now one half of a line $\gamma_\infty=\{y_\infty\} \times \bb{R}$ for some $y_\infty \in Y$. We parametrize $\gamma_\infty$ so that $\gamma_\infty(t)=(y_\infty,t)$.

        We use the line $\gamma_\infty \subset X_\infty$ to construct a line $\gamma \subset X$ containing the half line $r$.
        To this aim, consider the points $p_{-1,\infty}=\gamma_\infty(-1)$, $p_{0,\infty}=\gamma_\infty(0)$, and $p_{1,\infty}=\gamma_\infty(1)$. It holds $p_{1,\infty}=r_\infty(1)$, and $p_{0,\infty}=r_\infty(0)$. Consider points $p_{-1,i} \in (X,\sd/i)$ such that $p_{-1,i} \to p_{-1,\infty}$ in the space realizing the pGH convergence. We then set $p_{0,i}:=r(0)$ and $p_{1,i}=r(i)$. Since $r$ converges to $r_\infty$, it holds $p_{0,i} \to p_{0,\infty}$ and $p_{1,i} \to p_{1,\infty}$ in the space realizing the pGH convergence.

        Consider a length minimizing geodesic $\tilde{\gamma}_i:[-i,0] \to X$ parametrized by constant speed joining $p_{-1,i}$ to $p_{0,i}$. We remark that the speed might be different from $1$. Let $\gamma_i:[-i,i] \to X$ be the curve obtained by gluing $\tilde{\gamma}_i$ and $r_{|[0,i]}$. 
        We follow an argument from \cite{MR4393128} to prove that the curves $\gamma_i$ converge to a line $\gamma$. Given any $\varepsilon>0$, since $p_{j,i}\to p_{j,\infty}$ as $i\to\infty$ for $j=-1,0,1$, we have that, for $i$ sufficiently large,
        \begin{equation}
            \sd(p_{-1,i}, p_{0,i}) \leq (1+\varepsilon) i, \quad \sd(p_{0,i}, p_{1,i}) = i, \quad \text{and} \quad \sd(p_{-1,i}, p_{1,i}) \geq (1-\varepsilon) 2 i.
        \end{equation}
        Now take any $s\geq 0$. From the triangle comparison, we deduce that, for $i$ large enough, it holds
        \begin{equation} \label{E:SplitSec1}
            \sd(\gamma_i(-s), \gamma_i(s)) \geq \frac si \, \sd(p_{-1,i}, p_{1,i}) \geq 2 s(1-\varepsilon).
        \end{equation}
        On the other hand, we have that
        \begin{equation} \label{E:SplitSec2}
            \mathrm{length} (\gamma_{i|[-s,s]}) = \frac si [\sd(p_{-1,i}, p_{0,i}) + \sd(p_{0,i}, p_{1,i})] \leq (2+\varepsilon) s.
        \end{equation}
        Let $\gamma \subset X$ be the curve arising as limit of the curves $\gamma_i$ (modulo a subsequence). For every $s \geq 0$, combining \eqref{E:SplitSec1} and \eqref{E:SplitSec2}, it holds
        \[
        \sd(\gamma(-s),\gamma(s)) \geq 2s, \quad \text{ and } \quad \mathrm{length}(\gamma_{|[-s,s]}) \leq 2s.
        \]
        Hence, $\gamma$ is a line and, by construction, $\gamma$ contains the half line $r$ as claimed. 
        By the Splitting Theorem, there exists an isometry $\phi:X \to N \times \bb{R}$ such that $\phi(\gamma)=\{n\} \times \bb{R}$ and $\phi(r)=\{n\} \times [0,+\infty)$ for some $n \in N$.

        We now conclude the proof of Step 2.  Observe that, since $r$ satisfies \eqref{eq:mindistE} and $X \setminus \bar{E} = \partial E \times (0,+\infty)$, we have 
        \[
        X \setminus \bar{E}=\bigcup_{R>0}B^X_R(r(R)).
        \]
        Hence, using that $\phi(r)=\{n\} \times [0,+\infty)$ for some $n \in N$, it holds
        \[
        \phi(X \setminus \bar{E})=\bigcup_{R>0}\phi(B^X_R(r(R)))=
        \bigcup_{R>0}B^{N \times \bb{R}}_R((n,R))=N \times (0,+\infty).
        \]
        We deduce that $\phi(\bar{E})=N \times (-\infty,0]$, therefore $N\cong \partial E$ and $\bar E \cong\partial E \times [0,+\infty)$ as desired.
    \end{proof}
\end{thm}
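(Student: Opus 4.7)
The plan is to split on whether $\partial E$ is compact or not. If $\partial E$ is compact, then since $E$ must be non-compact (it is a perimeter minimizer) and $\partial E$ is totally geodesic (Theorem~\ref{T1} via Lemma~\ref{lem:magia}), I would invoke Kasue's splitting theorem for manifolds with non-negative sectional curvature and compact totally geodesic boundary, applied both to $\bar E$ and to $\overline{M\setminus E}$, to obtain $M \cong \partial E \times \mathbb{R}$ and $E \cong \partial E \times [0,+\infty)$.

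In the non-compact case, the preparatory work is already in place. Theorem~\ref{T1} with Lemma~\ref{lem:magia} gives that $E$ is smooth with $\partial E$ totally geodesic, and Lemma~\ref{Lemma:connectedness} gives connectedness of $\partial E$ and $E$. I would form the glued Alexandrov space $(X,\sd)$ as in Proposition~\ref{P:blowdownDim2}, in which $E$ is perimeter minimizing and whose blow-down $(X_\infty,\sd_\infty)$ is a $2$-dimensional cone. Then $X_\infty$ carries a limit measure $\m_\infty$ making it $\RCD(0,n)$ and contains a non-trivial perimeter minimizer $E_\infty$ (obtained as pmGH/$\sL^1_{loc}$ limit, cf. Proposition~\ref{P26}). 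Since the section of the cone is $1$-dimensional, either $X_\infty = C(S^1_R)$ with $R \in (0,1]$, or $X_\infty = C([0,l])$ with $l \in (0,\pi]$. In the first case, Theorem~\ref{P1} forces $\m_\infty = c\aH^2$, so by \cite[Proposition 6.26]{Weak} we get $R = 1$, hence $X_\infty = \bb{R}^2$ and $E_\infty$ is a half-plane. In the second case, Proposition~\ref{prop:main} gives $l = \pi$ and $E_\infty = C([0,\pi/2])$ or $C([\pi/2,\pi])$, provided the liminf in~\eqref{eq:assumption} is strictly positive; if instead the liminf vanishes, Theorem~\ref{T7} together with Yau's linear growth lower bound \cite{Y} would force $\partial E$ to be compact, contradicting our assumption. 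Either way, $X_\infty \cong Y \times \bb{R}$ with $E_\infty \cong Y \times \bb{R}_+$.

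The main obstacle is upgrading this splitting of the blow-down to a global splitting of $X$. I would construct a line $\gamma \subset X$ passing through $\partial E$ in the direction perpendicular to it. Concretely, pick any ray $r \subset \partial E \times \bb{R}_+$ realizing $\sd(E,r(t)) = t$. Under blow-down, $r$ converges to a ray $r_\infty$ inside $X_\infty \cong Y \times \bb{R}$ of the form $\{y_\infty\}\times[0,+\infty)$, which extends to a full line $\gamma_\infty = \{y_\infty\}\times\bb{R}$. Choosing points $p_{-1,i}, p_{0,i}, p_{1,i} \in X$ converging under pGH to $\gamma_\infty(-1), \gamma_\infty(0), \gamma_\infty(1)$, I would form geodesics in $X$ realizing the left half and use $r$ itself on the right; the rescaled side-length estimates (mirroring \eqref{E:SplitSec1}--\eqref{E:SplitSec2} that appear in Proposition~\ref{prop:main}'s last step, after \cite{MR4393128}) force the concatenation to converge to a bona fide line $\gamma \subset X$ extending $r$. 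Applying Gigli's Splitting Theorem to $X$ (which is $\RCD(0,n)$ being a non-negatively curved Alexandrov space of dimension $n$), I obtain an isometry $\phi: X \to N \times \bb{R}$ sending $\gamma$ to $\{n\}\times\bb{R}$ and $r$ to $\{n\}\times[0,+\infty)$.

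Finally I would identify $E$ inside the product structure. Since $X \setminus \bar E = \partial E \times (0,+\infty)$ can be written as $\bigcup_{R>0} B^X_R(r(R))$, applying $\phi$ yields $\phi(X\setminus \bar E) = \bigcup_{R>0} B^{N\times\bb{R}}_R((n,R)) = N \times (0,+\infty)$, hence $\phi(\bar E) = N \times (-\infty,0]$ and $N \cong \partial E$. Restricting the isometry $\phi$ back to $(\bar E, g) \subset X$ gives $\bar E \cong \partial E \times [0,+\infty)$; the symmetric argument applied to $\overline{M\setminus E}$ yields $\overline{M\setminus E} \cong \partial E \times [0,+\infty)$ as well, and gluing produces $M \cong \partial E \times \bb{R}$ with $E \cong \partial E \times [0,+\infty)$, as desired.
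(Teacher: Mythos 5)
Your proposal is correct and follows essentially the same route as the paper's proof: the same dichotomy on compactness of $\partial E$ via Kasue, the same gluing construction and blow-down analysis split into the $C(S^1_R)$ and $C([0,l])$ cases (handled respectively by Theorem \ref{P1} with \cite[Proposition 6.26]{Weak} and by Proposition \ref{prop:main}, with the degenerate vanishing-liminf case ruled out via Theorem \ref{T7} and Yau's bound), and the same construction of a line extending the ray $r$ followed by the Splitting Theorem and the union-of-balls identification of $\bar E$. No gaps to report.
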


\sloppy
\printbibliography
\end{document}